\documentclass[12pt]{amsart}

\setlength{\textwidth}{6.5in} \setlength{\textheight}{8.5in}

\setlength{\oddsidemargin}{0pt} \setlength{\evensidemargin}{0pt}
\setlength{\topmargin}{0pt}
\usepackage{epsfig,color}
\usepackage{amssymb}
\newtheorem{theorem}{Theorem}[section]

\newtheorem{lemma}[theorem]{Lemma}

\newtheorem{corollary}[theorem]{Corollary}

\title{{\bf Cyclic $m$-Cycle Systems of Complete Graphs minus a 1-factor}}

\author{Heather Jordon}
\address{Department of Mathematics\\
Illinois State University\\
Campus Box 4520\\
Normal, IL 61790-4520\\
USA} \email{hjordon@ilstu.edu}

\author{Joy Morris}
\address{Department of Mathematics \& Computer Science\\
University of Lethbridge\\
 Lethbridge, AB \\
Canada T1K 3M4}
\email{joy@cs.uleth.ca}

\date{\today}

\begin{document}
\def \l {\lambda}
\def \mod {{\rm mod}\,}
\def \Z {\mathbb Z}

\begin{abstract}
In this paper, we provide necessary and sufficient conditions for
the existence of a cyclic $m$-cycle system of $K_n-I$ when $m$ and
$n$ are even and $m \mid n$.
\end{abstract}

\maketitle
\def\baselinestretch{1.0}\small\normalsize

\section{Introduction}

Throughout this paper, $K_n$ will denote the complete graph on $n$
vertices, $K_n - I$ will denote the complete graph on $n$ vertices
with a 1-factor $I$ removed (a 1-{\it factor} is a 1-regular
spanning subgraph), and $C_m$ will denote the $m$-cycle $(v_1, v_2,
\ldots, v_m)$.  An {\it $m$-cycle system} of a graph $G$ is a set
$\mathcal C$ of $m$-cycles in $G$ whose edges partition the edge set
of $G$. An $m$-cycle system is called {\it hamiltonian} if $m =
|V(G)|$.

Several obvious necessary conditions for an $m$-cycle system
$\mathcal C$ of a graph $G$ to exist are immediate: $m \le |V(G)|$,
the degrees of the vertices of $G$ must be even, and $m$ must divide
the number of edges in $G$. A survey on cycle systems is given in
\cite{Ro} and necessary and sufficient conditions for the existence
of an $m$-cycle system of $K_n$ and $K_n-I$ were given in
\cite{AlGa,Sa} where it was shown that a $m$-cycle system of $K_n$
or $K_n-I$ exists if and only if $n\geq m$, every vertex of $K_n$ or
$K_n - I$ has even degree, and $m$ divides the number of edges in
$K_n$ or $K_n - I,$ respectively.

Throughout this paper, $\rho$ will denote the permutation $(0\ 1\
\ldots\ n-1)$, so $\langle\rho\rangle=\mathbb Z_n$.  An $m$-cycle
system $\mathcal C$ of a graph $G$ with vertex set $V(G)=\mathbb
Z_n$ is $cyclic$ if, for every $m$-cycle $C=(v_1, v_2, \ldots, v_m)$
in $\mathcal C$, the $m$-cycle $\rho(C)=(\rho(v_1), \rho(v_2),
\ldots, \rho(v_m))$ is also in $\mathcal C$. An $n$-cycle system
$\mathcal C$ of a graph $G$ with vertex set $\mathbb Z_n$ is called
a {\it cyclic hamiltonian cycle system.} Finding necessary and
sufficient conditions for cyclic $m$-cycle systems of $K_n$ is an
interesting problem and has attracted much attention (see, for
example, \cite{BEV, BG1, BD1, BD2, EPV, FW, Ko, Rosa}). The obvious
necessary conditions for a cyclic $m$-cycle system of $K_n$ are the
same as for an $m$-cycle system of $K_n$; that is, $n \ge m \ge 3$,
$n$ is odd (so that the degree of every vertex is even), and $m$
must divide the number of edges in $K_n$. However, these conditions
are no longer necessarily sufficient. For example, it is not
difficult to see that there is no cyclic decomposition of $K_{15}$
into 15-cycles. Also, if $p$ is an odd prime and $\alpha \ge 2$,
then $K_{p^{\alpha}}$ cannot be decomposed cyclically into
$p^{\alpha}$-cycles \cite{BD2}.

The existence question for cyclic $m$-cycle systems of $K_n$ has
been completely settled in a few small cases, namely $m = 3$
\cite{Pe}, $5$ and $7$ \cite{Rosa}. For even $m$ and $n \equiv
1 \pmod {2m}$,
cyclic $m$-cycle systems of $K_{n}$ are constructed for $m
\equiv 0 \pmod 4$ in \cite{Ko} and for $m \equiv 2 \pmod 4$ in
\cite{Rosa}. Both of these cases are handled simultaneously in
\cite{EPV}. For odd $m$ and $n \equiv 1 \pmod {2m}$, cyclic $m$-cycle
systems of $K_{n}$ are found using different methods in \cite{BEV,
BD1, FW}. In \cite{BG1}, as a consequence of a more general result,
cyclic $m$-cycle systems of $K_{n}$ for all positive integers $m$
and $n \equiv 1\ \pmod {2m}$ with $ n\ge m \ge 3$ are given using
similar methods. In \cite{BD2}, it is shown that a cyclic
hamiltonian cycle system of $K_n$ exists if and only if $n \not=15$
and $n \not\in \{p^{\alpha} \mid p \mbox{ is an odd prime and }
\alpha \ge 2 \}$. Thus, as a consequence of a result in \cite{BD1},
cyclic $m$-cycle systems of $K_{2mk+m}$ exist for all $m \not=15$
and $m \not\in \{p^{\alpha} \mid p \mbox{ is an odd prime and }
\alpha \ge 2 \}$. In \cite{V1}, the last remaining cases for cyclic
$m$-cycle systems of $K_{2mk+m}$ are settled, i.e., it is shown
that, for $k \ge 1$, cyclic $m$-cycle systems of $K_{2km+m}$ exist
if $m = 15$ or $m \in \{p^{\alpha} \mid p \mbox{ is an odd prime and
} \alpha \ge 2 \}$. In  \cite{WF}, necessary and sufficient
conditions for the existence of cyclic $2q$-cycle  and $m$-cycle
systems of the complete graph are given when $q$ is an odd prime
power and $3 \le m \le 32$. In \cite{B}, cycle systems with a
sharply vertex-transitive automorphism group that is not necessarily
cyclic are investigated. As a result, it is shown in \cite{B} that
no cyclic $m$-cycle system of $K_n$ exists if $m < n < 2m$ with $n$
odd and gcd$(m,n)$ a prime power.  In \cite{Wu}, it is shown that
if $m$ is even and $n > 2m$, then there exists a cyclic $m$-cycle system
of $K_n$ if and only if the obvious necessary conditions that $n$ is odd and that $n(n-1) \equiv 0 \pmod {2m}$ hold.

These questions can be extended to the case when $n$ is even by
considering the graph $K_n - I$. In \cite{BG1}, it is shown that for
all integers $m\geq 3$ and $k\geq 1$, there exists a cyclic
$m$-cycle system of $K_{2mk+2}-I$ if and only if $mk\equiv 0,3 \pmod
4$.  In \cite{GM}, it is shown that for an even integer $n \ge 4$,
there exists a cyclic hamiltonian cycle system of $K_n - I$ if and
only if $n \equiv 2, 4 \pmod 8$ and $n \not= 2p^{\alpha}$ where $p$
is an odd prime and $\alpha \ge 1$.  In \cite{BR}, it was shown that in every cyclic cycle decomposition of $K_{2n} - I$, the number of cycle orbits of odd length must have the same partity as $n(n-1)/2$.  As a consequence of this result, in \cite{BR}, it is shown that a cyclic $m$-cycle system of $K_{2n}-I$ can not exist if $n \equiv 2,3 \pmod 4$ and $m \not\equiv 0 \pmod 4$ or $n \equiv 0,1 \pmod 4$ and $m$ does not divide $n(n-1)$. In this paper we are interested in
cyclic $m$-cycle systems of $K_n-I$ when $m$ and $n$ are even and
$m \mid n$.  The main result of this paper is the following.

\begin{theorem} \label{main} For an even integer $m$ and integer $t$, there
exists
a cyclic $m$-cycle system of $K_{mt}-I$ if and only if
\begin{enumerate}
\item $t \equiv 0,2 \ (\mod 4)$ when $m \equiv 0\ (\mod 8),$
\item $t \equiv 0,1 \ (\mod 4)$ when $m \equiv 2\ (\mod 8)$ with
$t > 1$ if $m = 2p^\alpha$ for some prime $p$ and integer $\alpha
\ge 1,$
\item $t \ge 1$ when $m \equiv 4\ (\mod 8)$, and
\item $t \equiv 0,3 \ (\mod 4)$ when $m \equiv 6\ (\mod 8)$.
\end{enumerate}
\end{theorem}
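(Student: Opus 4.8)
The plan is to separate the problem, as is standard for cycle-system existence results, into the necessity direction (ruling out parameter values) and the much harder sufficiency direction (explicit cyclic constructions).

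The plan is to handle the two directions separately: necessity follows quickly from results quoted above, while sufficiency is an explicit construction.

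\emph{Necessity.} The ``obvious'' conditions impose nothing here: $m\le mt$ is trivial, $K_{mt}-I$ is regular of even degree $mt-2$, and $|E(K_{mt}-I)|=\binom{mt}{2}-\tfrac{mt}{2}=\tfrac{mt(mt-2)}{2}=m\cdot\tfrac{t(mt-2)}{2}$ is automatically a multiple of $m$ since $m$ is even. So every restriction on $t$ in the theorem must come from cyclicity, and I would obtain all four congruence restrictions from \cite{BR}: writing $mt=2n$, so $n=mt/2$, that result forbids a cyclic $m$-cycle system of $K_{2n}-I$ when $n\equiv 2,3\pmod{4}$ and $4\nmid m$, and when $n\equiv 0,1\pmod{4}$ and $m\nmid n(n-1)$. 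Substituting $n=mt/2$ and reducing $n$ modulo $4$ --- which depends only on $m$ modulo $8$ and $t$ modulo $4$ --- turns these into exactly the excluded residues of $t$ in (1), (2), (3), (4). The only remaining necessity claim is the exclusion of $t=1$ for $m=2p^{\alpha}$; there $K_{mt}-I=K_{2p^{\alpha}}-I$ and a cyclic $m$-cycle system would be a cyclic hamiltonian cycle system, which \cite{GM} rules out precisely when $n=2p^{\alpha}$.

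\emph{Sufficiency.} I would use the difference method, in the spirit of \cite{BG1}. Put $V(K_{mt}-I)=\mathbb{Z}_{mt}$ with $\rho=(0\ 1\ \cdots\ mt-1)$, and take $I$ to be the unique $\rho$-invariant $1$-factor, namely the length-$mt/2$ class; then $E(K_{mt}-I)$ is the disjoint union of the length classes $L_1,\dots,L_{mt/2-1}$, each a single $\rho$-orbit of $mt$ edges. If a base $m$-cycle has stabilizer of order $s$ in $\mathbb{Z}_{mt}$ (so $s\mid m$), its difference sequence has period $m/s$; for its $\rho$-orbit to be an edge-disjoint family the $m/s$ fundamental differences must have distinct lengths, none equal to $mt/2$, and the orbit then covers exactly $m/s$ of the classes $L_i$. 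Since $(mt/2-1)/m$ is never an integer, full orbits alone cannot suffice: the orders $s_j\ge 2$ of the short orbits used must satisfy $\sum_j m/s_j\equiv mt/2-1\pmod{m}$, i.e.\ $\sum_j m/s_j$ is congruent to $-1$ or to $m/2-1$ modulo $m$ according as $t$ is even or odd.

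The construction then splits into two parts. First, for each of the four residue classes of $m$ modulo $8$ and each permitted residue of $t$, I would exhibit one or two explicit short base cycles covering a prescribed small set of lengths and absorbing the $-1$ (resp.\ $m/2-1$) discrepancy; the choice of stabilizer orders and of the lengths they use is what varies with $m$ modulo $8$ and $t$ modulo $4$. Second, I would partition the remaining lengths --- a set of size a multiple of $m$ --- into blocks of $m$ lengths, each realized as one full $m$-cycle orbit by a standard lemma producing, for appropriate $a$, an $m$-cycle in $\mathbb{Z}_{mt}$ whose consecutive differences run with alternating signs through an interval of $m$ consecutive lengths and whose $\rho$-orbit is full. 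A finite list of small $t$ and small $m$ --- in particular the first few admissible values of $t$ when $m=2p^{\alpha}$ --- would be treated by ad hoc base cycles, and for large $t$ one can induct in steps of $4$, each step adjoining a batch of new length classes that itself splits into complete blocks of size $m$.

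The main obstacle is the first part together with the small cases. The parity bound of \cite{BR} is tight, so near the boundary --- small $t$, and especially the first few admissible values of $t$ when $m=2p^{\alpha}$ --- the short base cycles must be chosen with care, and for each one must check simultaneously that its differences are exactly the intended lengths with the right multiplicities, that length $mt/2$ never appears, and that its stabilizer has exactly the intended order. Organizing these short cycles uniformly across the four classes of $m$, and verifying that whatever lengths remain always decompose into the patterned blocks above, is the technical heart of the argument; the block lemma and the inductive step in $t$ are routine once this length bookkeeping is in place.
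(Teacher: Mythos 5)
Your necessity argument is sound and in fact takes a different route from the paper's: you deduce the congruence restrictions on $t$ from the non-existence result of \cite{BR} (substituting $n=mt/2$ and reducing modulo $4$ does yield exactly the excluded residues of $t$ in each of the four classes of $m$ modulo $8$, and the exclusion of $t=1$ when $m=2p^{\alpha}$ does follow from \cite{GM}), whereas the paper proves necessity in a self-contained way via Lemma \ref{t-conditions}, a parity count of the even and odd edge lengths that a cycle in an orbit of length $k=\ell t$ can carry. Either route is legitimate; yours is shorter but leans on the quoted external result.

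The sufficiency half, however, is where essentially all of the content of this theorem lives, and your proposal stops at exactly the point where the work begins. You have correctly identified the strategy the paper actually uses --- short-orbit base cycles of the form $C=P\cup\rho^{k}(P)\cup\cdots$ to absorb the discrepancy of $-1$ (for $t$ even) or $m/2-1$ (for $t$ odd) in the count of length classes modulo $m$, full-orbit cycles obtained from difference $m$-tuples read off arrays of $m$ lengths for the bulk of the classes, a reduction to cyclic systems of $K_m-I$ or $K_{2m}-I$ on the sublattice of lengths divisible by $t$ or $k$, and ad hoc treatment of small parameters --- but you do not exhibit a single base cycle, verify conditions (i)--(iv) for any proposed tuple, or show that the leftover lengths actually partition into realizable blocks. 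In the paper this occupies three full sections of explicit paths and arrays, with separate constructions for $m\equiv 0,2,4,6\pmod 8$, sub-cases $t\equiv 0,1,3\pmod 4$, and sporadic cases such as $m\in\{6,10,14\}$ and $t\in\{2,3,4,5\}$; moreover the feasibility of the bookkeeping (for instance, that the smallest length placed in the absorbed set $B$ exceeds $12s$, so that what remains splits into $\{1,\dots,12s\}$ together with consecutive pairs) rests on genuine inequalities involving auxiliary parameters $q,b,a$ that must be chosen and checked. Labelling all of this ``the technical heart'' without carrying it out leaves the proof of sufficiency, and hence of the theorem, incomplete.
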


Our methods involve circulant graphs and difference constructions.
In Section 2, we give some basic definitions and lemmas while the
proof of Theorem \ref{main} is given in Sections 3, 4 and 5.  In
Section 3, we handle the case when $m \equiv 0\pmod 8$ and show
that there is a cyclic $m$-cycle system of $K_{mt}-I$ if and only if
$t \ge 2$ is even.  In Section 4, we handle the case when $m \equiv 4 \pmod
8$ and show that there is a cyclic $m$-cycle system of $K_{mt}-I$
if and only if $t \ge 1$. In Section 5, we handle the case when $m \equiv 2 \pmod 4$.  When $m \equiv 2 \pmod 8$, we
show that there is a cyclic $m$-cycle system of $K_{mt}-I$
if and only if $t \equiv 0, 1 \pmod 4$.  When $m \equiv 6 \pmod 8$, we show that there is a cyclic $m$-cycle system of $K_{mt}-I$
if and only if $t \equiv 0, 3\pmod 4$.  Our main theorem then
follows.

\section{Preliminaries}

The notation $[1, n]$ denotes the set $\{1, 2, \ldots, n\}$. The proof of Theorem \ref{main} uses circulant graphs, which we now
define.  For $x\not\equiv 0\ (\mod n)$, the {\it modulo $n$ length} of an
integer $x$, denoted $|x|_n$, is
defined to be the smallest positive integer $y$ such
that $x\equiv y\ (\mod n)$ or $x\equiv -y\ (\mod n)$. Note that for any
integer $x\not\equiv 0\ (\mod n)$, it follows that
$|x|_n\in [1, \lfloor \frac{n}{2}\rfloor]$.
If $L$ is a set of
modulo $n$ lengths, we define the {\it circulant graph} $\langle L\rangle_n$ to be the graph with
vertex set $\mathbb Z_n$ and edge set
$\{\{i,j\}\mid|i-j|_n\in L\}$.  Notice that in order for a graph $G$ to admit
a cyclic $m$-cycle decomposition, $G$ must be a circulant graph, so
circulant graphs provide a natural setting in which to construct
cyclic $m$-cycle decompositions.

The graph $K_n$ is a circulant graph, since $K_n = \langle \{1, 2,
\ldots, \lfloor n/2 \rfloor\}\rangle_n$.  For $n$ even, $K_n - I$ is also a circulant graph,
since $K_n - I = \langle \{1, 2, \ldots, n-1\}\setminus\{n/2\}\rangle_n$ (so
the edges of the 1-factor $I$ are of the form $\{i, i+n/2\}$ for $i
= 0,1, \ldots, (n-2)/2$).

Let $H$ be a subgraph of a circulant graph $\langle L \rangle_n$. The notation $\ell(H)$ will denote the set
of modulo $n$ edge lengths belonging to $H$, that is,
$$\ell(H) = \{\ell \in L \mid \{g, g+\ell\} \in E(H) \text{ for some
}g\in \Z_n\}.$$ Many properties of $\ell(H)$ are independent of the
choice of $L$; in particular, the next lemma in this
section does not depend on the choice of $L$.

Let $C$ be an $m$-cycle in circulant graph $\langle L \rangle_n$ and recall that the permutation
$\rho = (0\ 1\
\ldots\ n-1)$, which generates $\Z_n$, has the property that $\rho(C)\in
\mathcal C$ whenever $C\in \mathcal C$.  We can therefore consider
the action of $\Z_n$ as a permutation group acting on the elements
of $\mathcal C$.  Viewing matters this way, the {\it length of the
orbit of} $C$ (under the action of $\Z_n$) can be defined as the
least positive integer $k$ such that $\rho^k(C) = C$. Observe that
such a $k$ exists since $\rho$ has finite order; furthermore, the
well-known orbit-stabilizer theorem (see, for example \cite[Theorem
1.4A(iii)]{DM}) tells us that $k$ divides $n$. Thus, if $G$ is a
graph with a cyclic $m$-cycle system $\mathcal C$ with $C \in
{\mathcal C}$ in an orbit of length $k$, then it must be that $k$
divides $n=|V(G)|$ and that $\rho(C), \rho^2(C), \ldots,
\rho^{k-1}(C)$ are distinct $m$-cycles in ${\mathcal C}$.

 The
next lemma gives many useful properties of an $m$-cycle $C$ in a
cyclic $m$-cycle system $\mathcal C$ of a graph $G$ with $V(G)=
\mathbb Z_n$ where $C$ is in an orbit of length $k$.  Many of these
properties are also given in \cite{BD2} in the case that $m=n$. The
proofs of the following statements follow directly from the previous
definitions and are therefore omitted.

\begin{lemma} \label{distinct_length}
Let $\mathcal C$ be a cyclic $m$-cycle system of a graph $G$ of
order $n$ and let $C \in {\mathcal C}$ be in an orbit of length $k$.
Then
\begin{enumerate}
\item $|\ell(C)| = mk/n$;
\item $C$ has $n/k$ edges of length $\ell$ for each $\ell \in \ell(C)$;
\item $(n/k) \mid \gcd(m,n)$;
\end{enumerate}
Let $k > 1$ and let $P: v_0=0, v_1, \ldots v_{mk/n}$ be a subpath of $C$ of
length $mk/n$. Then
\begin{enumerate}
\item [(4)] if there exists $\ell \in \ell(C)$
with $k \mid \ell$, then $m = n / \gcd(\ell,n)$,
\item [(5)] $v_{mk/n} = kx$ for some integer $x$ with $\gcd(x, n/k)=1$,
\item [(6)] $v_1, v_2, \ldots, v_{mk/n}$ are distinct modulo $k$,
\item [(7)] $\ell(P)=\ell(C)$, and
\item [(8)] $P, \rho^k(P), \rho^{2k}(P), \ldots, \rho^{n-k}(P)$ are
pairwise edge-disjoint subpaths of $C$.
\end{enumerate}
\end{lemma}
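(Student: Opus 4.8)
The eight statements in Lemma~\ref{distinct_length} are all routine consequences of the orbit-stabilizer setup just developed, so the proof is a sequence of short observations rather than a single argument. I would organize it around the \emph{base path} of $C$, namely the consecutive edges one traverses before $\rho^k$ brings the cycle back to itself.

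First I would fix notation: write $C=(v_0,v_1,\dots,v_{m-1})$ with $v_0=0$, and let $k$ be the orbit length, so $\rho^k(C)=C$ but $\rho^j(C)\neq C$ for $0<j<k$. Since $\rho^k(C)=C$, applying $\rho^k$ cyclically permutes the vertex sequence of $C$; because $\rho^k$ is a bijection of $\Z_n$ of order $n/\gcd(k,n)=n/k$ (as $k\mid n$), and $C$ has $m$ vertices, the induced permutation of the vertex-cyclic-sequence has order dividing $n/k$, and in fact equals $n/k$ (otherwise a smaller power of $\rho$ would fix $C$). Hence $n/k$ divides $m$, the cycle $C$ decomposes into $n/k$ translated copies (under $\langle\rho^k\rangle$) of a path $P$ of length $m/(n/k)=mk/n$, and the edge lengths of $C$ are exactly the lengths appearing in $P$, each length occurring once per translate, i.e.\ $n/k$ times. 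This simultaneously gives (1), (2), and (7), and (8) is just the statement that the $n/k$ translates $P,\rho^k(P),\dots,\rho^{(n/k-1)k}(P)$ tile $C$ edge-disjointly. For (3): each of the $n/k$ edges of $C$ of a fixed length $\ell$ uses up $n/k$ of the $n$ (or $n/2$, but this is where one must be slightly careful about $I$ — actually $\ell(C)\subseteq L$ and length-$\ell$ edges of $K_n$ or $K_n-I$ number $n$ or $n/2$) available length-$\ell$ edges, and since $\mathcal C$ is cyclic the length-$\ell$ edges of $C$ form unions of $\rho$-orbits, forcing $(n/k)\mid n$ trivially and, combined with (1) giving $|\ell(C)|=mk/n$ an integer, $(n/k)\mid\gcd(m,n)$.

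Next, assume $k>1$ and let $P:v_0=0,v_1,\dots,v_{mk/n}$ be the base path. For (5): $v_{mk/n}$ is the image of $v_0=0$ under $\rho^k$ composed with a cyclic shift, so $v_{mk/n}=\rho^k{}^{x}(0)=kx$ for some integer $x$; since the $n/k$ translates of the endpoint must be distinct (the translates of $P$ tile $C$), $kx,2kx,\dots$ run through all $n/k$ cosets, forcing $\gcd(x,n/k)=1$. For (6): if $v_i\equiv v_j\pmod k$ for some $0\le i<j\le mk/n$ with not both endpoints, then $v_j=v_i+k\cdot(\text{something})$, and translating $P$ by the appropriate power of $\rho^k$ would make two of the tiling paths share the vertex $v_i$ in their interiors, contradicting that $C$ is a simple cycle traversed once. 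For (4): if some $\ell\in\ell(C)$ satisfies $k\mid\ell$, then an edge $\{g,g+\ell\}$ of $C$ is fixed setwise... more precisely $\rho^\ell$ has order $n/\gcd(\ell,n)$, and one checks that following edges of length $\ell$ forces $C$ to be a single $\rho^\ell$-orbit on vertices, giving $m=n/\gcd(\ell,n)$; I would derive this by noting $k\mid\ell$ means the length-$\ell$ edge, translated by $\rho^k$, stays "in phase," so $C$ closes up exactly after $n/\gcd(\ell,n)$ steps.

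**Main obstacle.** The genuinely delicate point is keeping the two settings ($K_n$ versus $K_n-I$) straight in (3) and (4), since when $\ell=n/2$ there are only $n/2$ edges of that length and that length is \emph{absent} from $K_n-I$; I would handle this by working with the abstract circulant $\langle L\rangle_n$ as the excerpt suggests and only invoking $|\ell(C)|\in\Z$ from (1) plus divisibility of orbit lengths, never counting the global edge set, so that (3) follows purely from $(n/k)\mid m$ (from the tiling) and $(n/k)\mid n$ (trivial), hence $(n/k)\mid\gcd(m,n)$. The rest is bookkeeping with cosets mod $k$ and the fact that $\langle\rho^k\rangle$ acts freely on the set of $n/k$ base-path translates.
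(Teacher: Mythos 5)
The paper gives no proof of this lemma to compare against---it declares that the statements ``follow directly from the previous definitions'' and omits the argument---so I am assessing your proposal on its own terms. Your framework (the free action of $\langle\rho^k\rangle\cong\Z_{n/k}$ on $C$, tiling $C$ by $n/k$ translates of a base path $P$ of length $mk/n$) is the right one, and your sketches of (3)--(8) are essentially correct. One point you should make explicit: a nontrivial $\rho^{jk}$ could a priori act on the cycle $C$ as a reflection rather than a rotation; this is ruled out because a reflection of a cycle either fixes a vertex (impossible, since $v+jk=v$ forces $n\mid jk$) or reverses an edge (impossible, since that forces the edge to have length $n/2$, which is absent from $K_n$ and from $K_n-I$).

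The genuine gap is in (1) and (2). You assert that each length occurs ``once per translate,'' i.e.\ exactly once in the base path $P$, and in your final paragraph you explicitly plan to avoid ``counting the global edge set.'' But the distinctness of the lengths within $P$ is precisely the content of (1); it does not follow from the tiling, and it is false without the hypothesis that $C$ lies in a cyclic decomposition. For example, take $n=24$ and $C=(0,1,2,7,12,13,14,19)$: this is an $8$-cycle whose orbit length is $k=12$, so $mk/n=4$ and $n/k=2$, yet $\ell(C)=\{1,5\}$ has only $2$ elements and each length occurs $4$ times rather than $2$. What rescues (1) and (2) is exactly the global count you wanted to avoid: since $\mathcal C$ is a decomposition, the $k$ distinct cycles $C,\rho(C),\dots,\rho^{k-1}(C)$ are pairwise edge-disjoint, and the graph has only $n$ edges of each length $\ell\neq n/2$, so $C$ contains at most $n/k$ edges of any one length; combined with the tiling, which shows that the number of length-$\ell$ edges of $C$ is a positive multiple of $n/k$, this forces exactly $n/k$ edges of each length and hence $|\ell(C)|=m/(n/k)=mk/n$. (The example above is consistent with this: its $\rho$-orbit covers every edge of lengths $1$ and $5$ twice, so it cannot occur in any cyclic decomposition.) With that repair, the remaining items go through as you describe.
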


Let $X$ be a set of $m$-cycles in a graph $G$ with vertex set
$\mathbb Z_n$ such that ${\mathcal C}=\{\rho^i(C) \mid C\in
X,i=0,1,\dots,n-1\}$ is an $m$-cycle system of $G$. Then $X$ is
called a {\it generating set} for $\mathcal C$. Clearly, every cyclic $m$-cycle
system $\mathcal C$ of a graph $G$ has a generating set $X$ as we may
always let $X = {\mathcal C}.$  A generating set $X$ is called a {\it
minimum generating set} if $C \in X$ implies $\rho^i(C) \not\in X$ for
$1 \le i \le n$ unless $\rho^i(C) = C$.

Let $\mathcal C$ be a cyclic $m$-cycle system of a graph $G$ with
$V(G) = \mathbb Z_n$. To find a minimum generating set $X$ for
$\mathcal C$, we start by adding $C_1$ to $X$ if the length of the
orbit of $C_1$ is maximum among the cycles in $\mathcal C$.  Next,
we add $C_2$ to $X$ if the length of the orbit of $C_2$ is maximum
among the cycles in ${\mathcal C} \setminus \{ \rho^i(C_1) \mid 0
\le i \le n-1 \}$.  Continuing in this manner, we add $C_3$ to $X$
if the length of the orbit of $C_3$ is maximum among the cycles in
${\mathcal C} \setminus \{ \rho^i(C_1), \rho^i(C_2) \mid 0 \le i \le
n-1 \}$.  We continue in this manner until $\{\rho^i(C) \mid C \in
X, 0 \le i \le n-1 \} = {\mathcal C}$. Therefore, every cyclic
$m$-cycle system has a minimum starter set. Observe that if $X$ is a
minimum generating set for a cyclic $m$-cycle system $\mathcal C$
of the graph $\langle L \rangle_n$, then
it must be that the collection of
sets $\{ \ell(C) \mid C \in X\}$ forms a partition of $L$.

In this paper, we are interested in the cyclic $m$-cycle systems of
$K_n-I$ where $n = mt$ for some positive integer $t$.  Suppose $K_n$
has a cyclic $m$-cycle system $\mathcal C$ for some $n = mt$. Let
$X$ be a minimum generating set for $\mathcal C$ and let $C \in X$ be a
cycle in an orbit of length $k$.  Then, $\ell(C)$ has $mk/n = k/t$ lengths which
implies that $k = \ell t$ for some integer $\ell$. Also, since
$|\ell(C)| = \ell$, it follows that $\ell \mid m$. The following
lemma will be useful in determining the congruence classes of $t$
based on the congruence class of $m$ modulo 8.

\begin{lemma} \label{t-conditions}
Let $m$ be an even integer and let $K_{mt}-I$ have a cyclic
$m$-cycle system for some positive integer $t$.
\begin{enumerate}
\item If $\{1, 2, \ldots, (mt-2)/2\}$ has an odd number of even integers,
then $t$ is even.

\item If $\{1, 2, \ldots, (mt-2)/2\}$ has an odd number of odd integers,
then $t$ is odd.
\end{enumerate}
\end{lemma}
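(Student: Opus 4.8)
The plan is to keep track of the parity of edge lengths. Since $n=mt$ is even, the key elementary fact is that for every integer $x\not\equiv 0\ (\mod n)$ we have $|x|_n\equiv x\ (\mod 2)$, because $|x|_n\equiv\pm x\ (\mod n)$ and $2\mid n$. Consequently, for any closed walk $c_0,c_1,\dots,c_r=c_0$ in the circulant graph $K_{mt}-I=\langle L\rangle_{mt}$, where $L=[1,(mt-2)/2]$, we get $\sum_{i=1}^r|c_i-c_{i-1}|_n\equiv\sum_{i=1}^r(c_i-c_{i-1})=0\ (\mod 2)$; that is, every cycle of $\mathcal C$ (indeed every closed walk in $K_{mt}-I$) uses an even number of edges of odd length.

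Next I would fix a minimum generating set $X$ for $\mathcal C$, so that the sets $\ell(C)$ for $C\in X$ partition $L$; recall as well that $\ell_C:=|\ell(C)|$ divides $m$, that the orbit of $C$ has length $k=\ell_C t$, and that $C$ has exactly $m/\ell_C$ edges of each length of $\ell(C)$ (Lemma \ref{distinct_length}). Write $o(C)$ and $e(C)$ for the numbers of odd and of even lengths in $\ell(C)$, so $o(C)+e(C)=\ell_C$. By the first paragraph, $C$ has $o(C)\cdot(m/\ell_C)$ edges of odd length and this is even. When $m/\ell_C$ is odd --- in particular for every cycle lying in a full orbit, where $\ell_C=m$ --- this forces $o(C)$ to be even; moreover $\ell_C$ is then divisible by the same power of $2$ as $m$ and hence is even (as $m$ is even), so both $o(C)$ and $e(C)=\ell_C-o(C)$ are even.

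The substantive case is a cycle $C$ with $m/\ell_C$ even, which forces $\ell_C<m$, hence $k=\ell_C t<n$, so that the path statements of Lemma \ref{distinct_length} apply. (The degenerate subcase $k=1$ occurs only when $t=1$; there $C$ must consist of all $m$ edges of a single length $\ell$ with $\gcd(\ell,m)=1$, so $\ell$ is odd and $o(C)=1$, $e(C)=0$, a case I will simply record by hand.) Choose a subpath $P:v_0=0,v_1,\dots,v_{\ell_C}$ of $C$ of length $mk/n=\ell_C$. By Lemma \ref{distinct_length}(7), $\ell(P)=\ell(C)$, and since $P$ has $\ell_C$ edges while $|\ell(C)|=\ell_C$, each length of $\ell(C)$ occurs exactly once along $P$; thus $P$ has precisely $o(C)$ edges of odd length. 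Hence $o(C)\equiv\sum_{i=1}^{\ell_C}(v_i-v_{i-1})=v_{\ell_C}\ (\mod 2)$, and by Lemma \ref{distinct_length}(5), $v_{\ell_C}=kx$ with $\gcd(x,n/k)=1$; since $n/k=m/\ell_C$ is even, $x$ is odd, so $o(C)\equiv k=\ell_C t\ (\mod 2)$ and $e(C)\equiv\ell_C(1-t)\ (\mod 2)$.

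Finally I would sum over $C\in X$. If $t$ is even, then $o(C)$ is even for every $C\in X$ (when $m/\ell_C$ is odd this was shown above, and when $m/\ell_C$ is even we get $o(C)\equiv\ell_C t\equiv 0$), so the number of odd integers in $[1,(mt-2)/2]$, which equals $\sum_{C\in X}o(C)$, is even; contrapositively, if that number is odd then $t$ is odd, which is (2). If $t$ is odd, then $e(C)$ is even for every $C\in X$ (for $m/\ell_C$ odd, $e(C)\equiv\ell_C\equiv 0$; for $m/\ell_C$ even, $e(C)\equiv\ell_C(1-t)\equiv 0$), so the number of even integers in $[1,(mt-2)/2]$, which equals $\sum_{C\in X}e(C)$, is even, and contrapositively this is (1). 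The part I expect to need the most care is the short-orbit step: confirming that the chosen subpath $P$ meets each length of $\ell(C)$ exactly once so that the telescoping computation really returns $o(C)$, together with disposing of the $k=1$ boundary case cleanly.
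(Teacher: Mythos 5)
Your proof is correct and follows essentially the same route as the paper's: both hinge on the partition of $[1,(mt-2)/2]$ induced by a minimum generating set together with Lemma \ref{distinct_length}(5),(7), reading off the parity of the number of odd lengths of a generator $C$ from the endpoint $kx$ of the canonical subpath of length $|\ell(C)|$. You organize this as a direct parity count summed over all generators (handling the full-orbit and $k=1$ boundary cases separately via the closed-walk parity observation), whereas the paper argues by contradiction from a single ``bad'' cycle, but the substance is the same.
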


\begin{proof}
Let $m$ be even and suppose $K_{mt}-I$ has a cyclic $m$-cycle system
$\mathcal C$ for some positive integer $t$. Let $V(K_{mt}) = \mathbb
Z_{mt}$, and let $X$ be a minimum generating set for $\mathcal C$.

Suppose first that $\{1, 2, \ldots, (mt-2)/2\}$ has an odd number of
even integers. Since the set $\{\ell(C) \mid C \in X\}$ is a
partition of $\{1, 2, \ldots, (mt-2)/2\}$, there must be an odd
number of cycles $C$ in $X$ with $\ell(C)$ containing an odd number
of evens.  Let $C\in X$ be a cycle in an orbit of length $k$ with an odd number of even edge lengths.  Let
$|\ell(C)|=\ell$ and note that $k =\ell t$.  From Lemma \ref{distinct_length},
we know that the subpath of $C$ starting at vertex $0$ of length
$\ell$ ends at vertex $jk$ with $\gcd(j, m/\ell) = 1$.

Suppose first $k$ is odd. Then $\ell$ and $t$ must both be odd. Thus
$m/ \ell$ is even so that $jk$ is odd.  Hence, $\ell(C)$ contains an
odd number of odd integers and, since $|\ell(C)|$ is odd, an even
number of even integers,  contradicting the choice of $C$.
Thus, $k$ is even.  Since $k$ is even, $jk$ is even.  Thus,
$\ell(C)$ contains an even number of odd integers. If $\ell$ is
even, then $\ell(C)$ also contains an even number of even integers,
contradicting the choice of $C$.  Thus, $\ell$ is odd. Since $k$ is
even and $k = \ell t$, it must be that $t$ is even.

Now suppose $\{1, 2, \ldots, (mt-2)/2\}$ has an odd number of odd
integers.  Hence there are an odd number of cycles $C$ in $X$ with
$\ell(C)$ containing an odd number of odd integers.  Again, let $C\in X$
be such a cycle with $|\ell(C)| = \ell$, in an orbit of length $k= \ell t$. Let
the subpath of $C$ starting at vertex $0$ of length $\ell$ end at
vertex $jk$ with $\gcd(j, m/ \ell) = 1$.  Now, if $k$ is even, then
$jk$ is even so that $\ell(C)$ contains an even number of odd
integers, contradicting the choice of $C$.  Thus $k$ is odd.  Since $k=\ell t$, we have that $t$ is odd.
\end{proof}

The following corollary is an immediate consequence of Lemma
\ref{t-conditions} and \cite{GM}.

\begin{corollary}
For an even integer $m$ and a positive integer $t$, if there exists
a cyclic $m$-cycle system of $K_{mt}-I$, then
\begin{enumerate}
\item $t \equiv 0,2 \ (\mod 4)$ when $m \equiv 0\ (\mod 8),$
\item $t \equiv 0,1 \ (\mod 4)$ when $m \equiv 2\ (\mod 8)$ with
$t > 1$ if $m = 2p^\alpha$ for some prime $p$ and integer $\alpha
\ge 1,$
\item $t \equiv 0,3 \ (\mod 4)$ when $m \equiv 6\ (\mod 8)$.
\end{enumerate}
\end{corollary}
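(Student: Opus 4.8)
The plan is to extract explicit congruence restrictions on $t$ directly from Lemma~\ref{t-conditions}, handling each residue class of $m$ modulo $8$ in turn, and then to invoke the classification of cyclic hamiltonian cycle systems of $K_n - I$ from \cite{GM} to dispose of the one exceptional value $t = 1$.

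First I would record the elementary fact that $\{1, 2, \dots, N\}$ contains $\lfloor N/2 \rfloor$ even integers and $\lceil N/2 \rceil$ odd integers, so it has an odd number of even integers exactly when $N \equiv 2, 3 \pmod 4$ and an odd number of odd integers exactly when $N \equiv 1, 2 \pmod 4$. Applying this with $N = (mt - 2)/2 = (m/2)t - 1$, and noting that $m/2 \equiv 0, 1, 2, 3 \pmod 4$ according as $m \equiv 0, 2, 4, 6 \pmod 8$, I would compute $N \bmod 4$ as a function of $t \bmod 4$ in each case. For $m \equiv 0 \pmod 8$ one gets $N \equiv 3 \pmod 4$ for every $t$, so Lemma~\ref{t-conditions}(1) forces $t$ even, i.e. $t \equiv 0, 2 \pmod 4$. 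For $m \equiv 2 \pmod 8$ one gets $N \equiv t - 1 \pmod 4$; here $t \equiv 2 \pmod 4$ is impossible because it makes $\{1, \dots, N\}$ have an odd number of odd integers, which by Lemma~\ref{t-conditions}(2) would force $t$ odd, and $t \equiv 3 \pmod 4$ is impossible because it makes that set have an odd number of each, forcing $t$ to be both even and odd; thus $t \equiv 0, 1 \pmod 4$. For $m \equiv 6 \pmod 8$ one gets $N \equiv 3t - 1 \pmod 4$, and the same reasoning rules out $t \equiv 1$ and $t \equiv 2 \pmod 4$, leaving $t \equiv 0, 3 \pmod 4$. (For $m \equiv 4 \pmod 8$ the congruences impose no restriction, which is why that case does not appear in the statement.)

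To complete part (2) I would note that when $t = 1$ a cyclic $m$-cycle system of $K_{mt} - I$ is precisely a cyclic hamiltonian cycle system of $K_m - I$. By the main result of \cite{GM}, such a system exists only if $m \equiv 2, 4 \pmod 8$ and $m$ is not of the form $2p^\alpha$ for an odd prime $p$ and integer $\alpha \ge 1$; since we are in the case $m \equiv 2 \pmod 8$, the hypothesis $m = 2p^\alpha$ already forbids $t = 1$, so $t > 1$, which together with $t \equiv 0, 1 \pmod 4$ is exactly the asserted condition.

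I do not expect a genuine obstacle: Lemma~\ref{t-conditions} carries out all of the combinatorial work and \cite{GM} supplies the single structural input. The only point demanding care is the bookkeeping relating $N \bmod 4$ to $t \bmod 4$ in the three residue classes, and in particular keeping track of which congruences on $t$ are merely consistent with Lemma~\ref{t-conditions} and which are actually contradicted by it.
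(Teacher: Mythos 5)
Your proposal is correct and is exactly the argument the paper intends: the paper states the corollary as ``an immediate consequence of Lemma \ref{t-conditions} and \cite{GM}'' without writing out the details, and your case analysis of $N=(m/2)t-1$ modulo $4$ together with the appeal to \cite{GM} for the $t=1$, $m=2p^{\alpha}$ exclusion supplies precisely those details. The parity bookkeeping in each residue class of $m$ modulo $8$ checks out.
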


Let $n > 0$ be an integer and
suppose there exists an ordered $m$-tuple $(d_1,d_2,\dots,d_m)$
satisfying each of the following:
\begin{enumerate}
\item[(i)]
$d_i$ is an integer for $i=1,2,\dots,m$;
\item[(ii)]
$|d_i| \neq|d_j|$ for $1\leq i<j\leq m$;
\item[(iii)]
$d_1+d_2+\dots+d_m\equiv 0(\mod n)$; and
\item[(iv)]
$d_1+d_2+\dots+d_r\not\equiv d_1+d_2+\dots+d_s(\mod n)$ for $1\leq
r<s\leq m$.
\end{enumerate}
Then an $m$-cycle $C$ can be constructed from this $m$-tuple, that
is, let $C = (0,d_1,d_1+d_2,\dots,d_1+d_2+\dots+d_{m-1})$, and
$\{C\}$ is a minimum generating set for a cyclic $m$-cycle system of
$\langle \{d_1, d_2 ,\dots, d_m\} \rangle_n$.  Thus, in what follows, to
find cyclic $m$-cycle systems of $\langle L \rangle_n$, it suffices to
partition $L$ into $m$-tuples satisfying the above conditions.
Hence, an $m$-tuple satisfying (i)-(iv) above is called a {\it
difference $m$-tuple} and it {\it corresponds} to the $m$-cycle
$C=(0, d_1, d_1+d_2, \ldots, d_1+d_2+\cdots d_{m-1})$ in $ \langle
L\rangle_n$.

\section{The Case when $m \equiv 0 \pmod 8$}

In this section, we consider the case when $m \equiv 0\ (\mod 8)$
and show that there exists a cyclic $m$-cycle system of $K_{mt}-I$
for each even positive integer $t$. We begin with the case $t=2$.

\begin{lemma} \label{0mod8_2m}
For each positive integer $m \equiv 0 \pmod{8}$, there exists a cyclic $m$-cycle system of $K_{2m}-I$.
\end{lemma}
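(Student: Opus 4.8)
The plan is to construct an explicit minimum generating set for a cyclic $m$-cycle system of $K_{2m}-I$ by partitioning the length set $L = \{1, 2, \ldots, m-1\} \setminus \{m/2\}$ (this is $\ell(K_{2m}-I)$ with $n = 2m$, and $|L| = m-2$) into pieces that each carry a difference $m$-tuple in the sense of the final paragraph of Section~2. Since $n = 2m$, Lemma~\ref{distinct_length} tells us that a cycle $C$ in an orbit of length $k$ uses $|\ell(C)| = k/2$ lengths, and $(n/k) \mid \gcd(m, 2m) = m$; the natural building blocks are therefore \emph{full-orbit} cycles ($k = n = 2m$, using $m$ distinct lengths each once) and \emph{short-orbit} cycles of orbit length $k = m$ (using $m/2$ distinct lengths, each appearing twice in the cycle). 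We have $m-2$ lengths available, one short of the $m$ needed for a single full-orbit cycle, so a single short-orbit cycle on a well-chosen set of $m/2$ lengths together with a near-full-orbit construction on the remaining $(m-2) - m/2 = (m-4)/2$ lengths will be the skeleton; alternatively, and more cleanly, I would aim to tile $L$ entirely by short-orbit cycles. Since $|L| = m-2$ is not a multiple of $m/2$ in general, the cleanest route is: use \emph{one} short-orbit cycle on a set $S$ of $m/2$ cleverly chosen lengths, and handle the remaining $(m-4)/2$ lengths with a single cycle of some intermediate orbit length, or split $L$ as $(m/2) + (m/2 - 2)$ and treat the deficit of $2$ by letting the second cycle use two of its lengths only once apiece (a "mixed'' orbit). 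I expect the paper in fact does the first: a direct, hands-on construction.

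Concretely, for the short-orbit cycle of length $k = m$ I would take lengths that are all odd (there are exactly $m/2$ odd values in $[1, m-1]$, namely $1, 3, \ldots, m-1$, and none of these equals $m/2$ since $m/2$ is even as $m \equiv 0 \pmod 8$), arrange them as a closed sequence of $m$ steps $(d_1, \ldots, d_m)$ in which each odd length $\ell$ appears as $+\ell$ once and $-\ell$ once, check $\sum d_i \equiv 0 \pmod{2m}$ (automatic, since the terms cancel in pairs), and check that the partial sums $d_1 + \cdots + d_r$ are distinct modulo $m$ for $0 \le r \le m$ — condition (6) of Lemma~\ref{distinct_length} — together with the global distinctness condition (iv) modulo $2m$. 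A standard "zig-zag'' ordering of $\pm 1, \pm 3, \ldots, \pm(m-1)$ (e.g. partial sums running through $0, 1, -2, 3, -4, \ldots$ pattern, rescaled to odd steps) makes the partial sums hit $m/2$ distinct residues mod $m$, each twice, in the required staggered way. The remaining even lengths $\{2, 4, \ldots, m-2\} \setminus \{m/2\}$, of which there are $(m-4)/2$, I would then assemble into one further cycle; since all these lengths are even, a cycle built from them lives on the even vertices and reduces (after dividing by $2$) to finding a suitable difference $r$-tuple on a length set inside $\Z_m$, which is a smaller instance of essentially the same kind of problem and can be handled by an explicit ordering plus the partial-sum check.

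The key facts I will lean on are the difference-$m$-tuple criterion (conditions (i)--(iv) at the end of Section~2, which reduce the whole problem to combinatorics of partial sums) and parts (5), (6) of Lemma~\ref{distinct_length} as a sanity check on orbit lengths. The main obstacle, and where the real work lies, is verifying condition (iv) — that all $m$ partial sums $0, d_1, d_1 + d_2, \ldots, d_1 + \cdots + d_{m-1}$ are pairwise distinct modulo $2m$ for the short-orbit cycle (where they need only be distinct modulo $k = m$, with each residue class mod $m$ hit exactly twice, but the two preimages must differ by exactly $m$) — and simultaneously condition (ii), that the lengths actually realized are all distinct; a naive ordering of $\pm 1, \pm 3, \ldots$ will typically fail one of these, so the bulk of the proof will be exhibiting one ordering that works and checking it, likely by giving a closed formula for the partial sums and computing. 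I would also need to separately confirm the edge count is right: $K_{2m}-I$ has $\binom{2m}{2} - m = 2m^2 - 2m$ edges, the short-orbit cycle contributes $m \cdot (m/2) \cdot (n/k)/(\text{orbit}) $ — more simply, its orbit contains $m$ cycles of $m$ edges but each length appears $n/k = 2$ times, so the orbit covers the $(m/2)$ odd lengths with all $2m$ of their edges, i.e. $(m/2) \cdot 2m = m^2$ edges; the even-length cycles must then cover the remaining $2m^2 - 2m - m^2 = m^2 - 2m = m(m-2)$ edges, consistent with $(m-4)/2$ even lengths other than $m/2$ carrying $2m$ edges each plus whatever the excluded interactions force — this bookkeeping is routine once the constructions are pinned down, so I would defer it to the end as a consistency check rather than a driver of the argument.
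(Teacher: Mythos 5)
Your very first step misidentifies the length set, and this is fatal to the bookkeeping. The $1$-factor removed from $K_{2m}$ consists of the edges $\{i,i+m\}$, which have modulo-$2m$ length $n/2=m$, not $m/2$; hence $\ell(K_{2m}-I)=\{1,2,\ldots,m-1\}$, a set of $m-1$ lengths (the set $S'$ in the paper's proof), and the length $m/2$ is still present, with $2m$ edges that must be covered. Your own edge count exposes the slip: you correctly compute that $K_{2m}-I$ has $2m^2-2m$ edges, but $m-2$ lengths at $2m$ edges each account for only $2m^2-4m$; the missing $2m$ edges are exactly those of length $m/2$ that you discarded. There is a second, independent obstruction: after spending all $m/2$ odd lengths on one short-orbit cycle, you propose to place the remaining even lengths on ``one further cycle.'' By Lemma~\ref{distinct_length}(1) a single $m$-cycle in an orbit of length $k$ carries exactly $k/2$ distinct lengths here, and $k$ must divide $2m$; covering the (correct count of) $(m-2)/2$ remaining lengths would force $k=m-2$, and $m-2\nmid 2m$ once $m\ge 8$, while even your count of $(m-4)/2$ lengths forces $k=m-4$, which divides $2m$ only for $m\le 12$. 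So the leftover lengths must necessarily be spread over several orbits. This is what the paper actually does: one cycle $C=P\cup\rho^m(P)$ of orbit length $m$ on a mixed set of $m/2$ lengths (not the odd lengths), the cycle $(0,2,4,\ldots,2m-2)$ of orbit length $2$ covering the single length $2$, and $2r-1$ cycles of orbit length $4$, each built from a path of length $2$ and covering a pair of the remaining lengths.

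Beyond these structural problems, the proposal never exhibits the orderings it relies on: the ``standard zig-zag'' for the chosen lengths and the ``explicit ordering plus partial-sum check'' for the rest are precisely the content of the lemma --- the paper's proof consists almost entirely of writing down such paths case-by-case in $r$ and verifying the partial sums --- so even with the length set corrected, what you have is a plan rather than a proof.
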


\begin{proof}
Let $m$ be a positive integer such that $m \equiv 0\ (\mod
8)$, say $m=8r$ for some positive integer $r$.  Then $K_{2m}
-I = \langle S' \rangle_{2m}$ where $S' = \{1, 2, \ldots, m-1\}= \{1, 2, \ldots, 8r-1\}$. The proof proceeds as follows.  We begin by finding a path $P$ of length $m/2=4r$, ending at vertex $m$, so that $C = P \cup \rho^m(P)$ is an $m$-cycle.  Note that $\langle \{2\}\rangle_{2m}$ consists of two vertex disjoint $m$-cycles. For the remaining $4r-2$ edge lengths in $S' \setminus (\ell(P)\cup \{2\})$, we find $2r-1$ paths $P_i$ of length $2$, ending at vertex $4$ or $-4$, so that $C_i = P_i \cup \rho^4(P_i) \cup \rho^8(P_i) \cup \cdots \rho^{2m-4}(P_i)$ is an $m$-cycle.  Then this collection of cycles will give a minimum generating set for a cyclic $m$-cycle system of $K_{2m}-I$.

Suppose first that $r$ is odd. For $r = 1$, let $P: 0, -3, 3, 7, 8$ and note that the edge lengths of $P$ in the order encountered are $3, 6, 4, 1$. For $r = 3$, let $$P: 0, -3, 3, -7, 7, -11, 11, 23, 19, 20, -20, -4, 24$$ and note that edge lengths of $P$ in the order encountered are $3, 6, 10, 14, 18, 22, 12, 4, 1, 8, 16, 20$.  For $r \ge 5$, let
\begin{eqnarray*}
P&:& 0, -3, 3, -7, 7, \ldots, -(4r-1), 4r-1, 8r-1, 8r-5, 8r-4, 8r+4, 8r-8, 8r+8, \ldots, \\
&&6r-2, 10r-2, 6r-10, 10r+2, 6r-14, \ldots, 12r-8, 4r-4, 8r
\end{eqnarray*}
be a path of length $m/2$ whose edge lengths in the order encountered are
$3,6,10,14,\ldots,8r-6,8r-2,4r,4,1,8,12,16,\ldots,4r-4,4r+8,4r+12,\ldots,8r-8,8r-4,4r+4$.

Now suppose that $r$ is even. For $r = 2$, let $P: 0, -3, 3, -7, 7, -1, -5, -4, 16$ and note that the edge lengths of $P$ in the order encountered are $3, 6, 10, 14, 8, 4, 1, 12$.  For $r \ge 4$, let
\begin{eqnarray*}
P&:& 0, -3, 3, -7, 7, \ldots, -(4r-1), 4r-1, -1, -5, -4, 4, -8, 8, \ldots, \\
&&-(2r-4), 2r-4, -2r, 2r+8, -(2r+4), 2r+12, \ldots, -(4r-8), 4r, -(4r-4), 8r
\end{eqnarray*}
 be a path of length $m/2$ whose edge lengths in the order encountered are
$3,6,10,14,\ldots,8r-6,8r-2,4r,4,1,8,12,16,\ldots,4r-8,4r-4,4r+8,4r+12,\ldots,8r-8,8r-4,4r+4$.

 In each case, let $C = P \cup \rho^m(P)$ and observe that $C$ is an $m$-cycle $C$ with $\ell(C)=\{1,3,4,6,8,\ldots, 8r-2\}$. Let $C' = (0, 2, 4, 6, \ldots, 2m-2)$ and note that $C'$ is an $m$-cycle with $\ell(C') = \{2\}$.

For $0 \le i \le r-2$, let $P_i:0, 9+8i, 4$ be the path of length 2 with edge lengths $9+8i, 5+8i$ and let $P_i': 0, 11+8i, 4$ be the path of length $2$ with edge lengths $11+8i,7+8i$.  Let $C_i = P_i \cup \rho^4(P_i) \cup \rho^8(P_i) \cup \cdots \rho^{2m-4}(P_i)$  and $C_i' = P_i' \cup \rho^4(P_i') \cup \rho^8(P_i') \cup \cdots \rho^{2m-4}(P_i')$  and note that each is an $m$-cycle with $\ell(C_i)=\{5+8i,9+8i\}$ and $\ell(C_i')=\{7+8i,11+8i\}$.

Finally, let $P'': 0, 8r-3, -4$ be the path of length $2$ with edge lengths $8r-3$ and $8r-1$. Let $C'' = P'' \cup \rho^4(P'') \cup \rho^8(P'') \cup \cdots \rho^{2m-4}(P'')$ and note that $C''$ is an $m$-cycle with $\ell(C'') = \{8r-3, 8r-1\}$.

Then $\{C,C', C_0, \ldots, C_{r-2}, C_0', \ldots, C_{r-2}', C''\}$ is a minimum generating set for a cyclic $m$-cycle system of $K_{2m}-I$.
\end{proof}

We now consider the case when $t$ is even and $t > 2$.

\begin{lemma}
For each positive integer $k$ and each positive integer $m \equiv 0\ (\mod 8)$, there
exists a cyclic $m$-cycle system of $K_{2mk} - I$.
\end{lemma}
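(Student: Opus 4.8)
We want a cyclic $m$-cycle system of $K_{2mk}-I$ for every $k\ge 1$ and every $m\equiv 0\pmod 8$. The natural strategy is induction on $k$, with Lemma \ref{0mod8_2m} supplying the base case $k=1$. So assume $k\ge 2$ and that a cyclic $m$-cycle system of $K_{2m(k-1)}-I$ exists; we will build one for $K_{2mk}-I$. The idea is to view $K_{2mk}-I=\langle L_k\rangle_{2mk}$ where $L_k=\{1,2,\ldots,mk-1\}\setminus\{?\}$ — careful: since $2mk$ is even, $K_{2mk}-I=\langle\{1,2,\ldots,2mk-1\}\setminus\{mk\}\rangle_{2mk}$. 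We partition $L_k$ into two parts: the ``short'' lengths that already appear (suitably scaled) in the system on $2m(k-1)$ vertices, and the ``new'' lengths introduced by enlarging the vertex set, and we decompose each part into difference $m$-tuples as in the discussion preceding Section 3.

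\textbf{Step 1: lifting the old system.} There is a standard blow-up / length-rescaling device: given a cyclic $m$-cycle system of $\langle L\rangle_N$, one obtains a cyclic $m$-cycle system of $\langle cL\rangle_{cN}$ for any positive integer $c$ by multiplying every difference $m$-tuple by $c$ (conditions (i)--(iv) are preserved because they are $\bmod\,N$ statements and $d_1+\cdots+d_r\equiv d_1+\cdots+d_s\pmod N$ iff $c(d_1+\cdots+d_r)\equiv c(d_1+\cdots+d_s)\pmod{cN}$ when we only need distinctness, though one must check the absolute-value condition (ii) — multiplication by $c$ does preserve $|cd_i|\ne|cd_j|$). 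Taking $c=k/(k-1)$ is not an integer, so instead I would take the $k=1$ system on $\langle\{1,\ldots,m-1\}\rangle_{2m}$, multiply all differences by $k$ to land in $\langle\{k,2k,\ldots,(m-1)k\}\rangle_{2mk}$, and also include the length $mk$? No — $mk$ is the removed 1-factor length. So scaling the $k=1$ system by $k$ accounts for exactly the lengths $\{k,2k,\ldots,(m-1)k\}$ inside $\{1,\ldots,2mk-1\}\setminus\{mk\}$.

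\textbf{Step 2: handling the remaining lengths.} What remains is $L_k\setminus\{k,2k,\ldots,(m-1)k\}$, i.e. all lengths in $[1,2mk-1]\setminus\{mk\}$ that are not multiples of $k$, plus the multiples of $k$ lying in $\{(m+1)k,\ldots\}$ up to $2mk-1$ except $mk$ — that is, $\{(m+1)k,(m+2)k,\ldots,(2m-1)k\}$. The multiples-of-$k$ part can again be absorbed by scaling: $\{(m+1)k,\ldots,(2m-1)k\}=k\cdot\{m+1,\ldots,2m-1\}$, and via $x\mapsto$ length $|x|_{2mk}$ these are the scaled versions of lengths $\{m-1,m-2,\ldots,1\}$ with the opposite sign, which is just a second copy — so in fact scaling the $k=1$ system handles $k\cdot[1,2m-1]\setminus\{km\}$, i.e. \emph{all} multiples of $k$ in range except $km$. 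Then the genuinely new lengths are precisely the non-multiples of $k$ in $[1,2mk-1]$, and these form the length set of the circulant graph $\langle k\nmid\ell\rangle_{2mk}$, which is exactly $K_{2m}\times$(something) minus ... more usefully: the lengths not divisible by $k$ split into $k-1$ ``cosets'' and one can build $m$-cycles from them using short paths of the type $P_i:0,a,4$ (paths of length $2$ repeated around via $\rho^4$) exactly as in the proof of Lemma \ref{0mod8_2m}. Concretely, I expect to pair up new lengths into difference $2$-tuples $(d,-(d+4))$ or $(d, d')$ with $d+d'\equiv$ small constant, so that iterating $\rho^4$ closes into an $m$-cycle; since $m\equiv 0\pmod 8$, $4\mid m$ and $m/4$ such steps work. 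One must make sure the count of leftover new lengths is divisible by $2$ and that one can always reach the required ``closing'' vertex; a few sporadic small-$k$ cases may need ad hoc tuples just as $r=1,2,3$ did in Lemma \ref{0mod8_2m}.

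\textbf{Main obstacle.} The bookkeeping is the whole difficulty: one must exhibit an explicit partition of the non-multiple-of-$k$ lengths in $[1,2mk-1]$ into difference $m$-tuples, uniformly in $k$ and $m$, verifying conditions (i)--(iv) — in particular that partial sums stay distinct mod $2mk$. The cleanest route is probably to not induct at all but to give a direct construction: write $2mk=2m\cdot k$, use the scaled $k=1$ system for multiples of $k$, and for each residue $j$ with $1\le j\le k-1$ bundle the lengths $\equiv \pm j\pmod k$ into $m$-cycles built from length-$2$ paths closing under $\rho^{2m}$ or $\rho^4$. The delicate points will be (a) the boundary lengths near $mk$ and near $2mk$, where the ``$|x|_{2mk}$'' folding identifies $\ell$ with $2mk-\ell$, and (b) separating the parity cases of $k$ (and perhaps of $m/8$), mirroring the odd/even-$r$ split in Lemma \ref{0mod8_2m}. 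I would therefore organize the proof as: (1) reduce, by scaling, to decomposing $\langle\{\ell\in[1,2mk-1]:k\nmid\ell\}\rangle_{2mk}$; (2) further reduce, by an inner scaling/coset argument, to a bounded number of explicit short-path gadgets; (3) verify (i)--(iv) for those gadgets; (4) collect the cycles into a minimum generating set. I expect step (2)--(3) — producing gadgets that work for all residues $j$ simultaneously and checking partial-sum distinctness — to be where essentially all the real work lies.
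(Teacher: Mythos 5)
Your Step 1 is exactly the paper's reduction: scale the $k=1$ system (Lemma \ref{0mod8_2m}) by $k$, so that the multiples of $k$ are absorbed by $k$ vertex-disjoint copies of $K_{2m}-I$ sitting on the cosets of $k\mathbb{Z}_{2mk}$, leaving only $S=\{1,\ldots,mk-1\}\setminus\{k,2k,\ldots,(m-1)k\}$ to decompose. (Incidentally, you make this harder than it is by writing the length set as $\{1,\ldots,2mk-1\}\setminus\{mk\}$ and then worrying about the $\ell\leftrightarrow 2mk-\ell$ folding; the modulo-$2mk$ lengths of $K_{2mk}-I$ are just $\{1,\ldots,mk-1\}$, so there is no folding issue and no ``second copy'' of anything to account for.)

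The genuine gap is Step 2: you never actually produce the decomposition of $\langle S\rangle_{2mk}$, and you say yourself that this is ``where essentially all the real work lies.'' A plan that defers the entire construction is not a proof. Moreover, the gadget you propose (length-$2$ paths translated by $\rho^4$) does not carry over literally: in $\mathbb{Z}_{2mk}$ a path closed up under $\rho^4$ yields an $mk$-cycle, not an $m$-cycle, so you would have to close under $\rho^{4k}$, pair each residue class $i\pmod k$ into lengths differing by $4k$, and then deal with parity and boundary cases — none of which you do. The paper avoids all of this with one uniform construction: the non-multiples of $k$ in $[1,mk-1]$ split into the $k-1$ arithmetic progressions $\{i,k+i,2k+i,\ldots,(m-1)k+i\}$ for $1\le i\le k-1$, each of which has $m\equiv 0\pmod 4$ terms and hence satisfies $\sum_{j\equiv 0,1\,(\mathrm{mod}\,4)}a_{i,j}=\sum_{j\equiv 2,3\,(\mathrm{mod}\,4)}a_{i,j}$; the standard zig-zag ordering $(a_{i,1},-a_{i,3},a_{i,5},\ldots,-a_{i,m-1},-a_{i,m-2},\ldots,a_{i,4},-a_{i,2},a_{i,m})$ is then a difference $m$-tuple, giving one full-orbit $m$-cycle per residue class. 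This needs no induction, no parity split, and no sporadic small cases. To repair your proposal you would either have to import that array construction or fully work out and verify your short-path gadgets, including the cycle-closure and edge-disjointness conditions (i)--(iv).
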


\begin{proof}
Let $m$ and $k$ be positive integers such that $m \equiv 0\ (\mod
8)$. Lemma \ref{0mod8_2m} handles the case when $k = 1$ and thus we may assume that $k \ge 2$. Then $K_{2km}
-I = \langle S' \rangle_{2km}$ where $S' = \{1, 2, \ldots, km-1\}$.
Since $K_{2m} - I$ has a cyclic $m$-cycle system by Lemma \ref{0mod8_2m} and
$\langle \{k, 2k, \ldots, mk\}\rangle_{2km}$ consists of
$k$ vertex-disjoint copies of $K_{2m} - I$, we need only show that
$\langle S\rangle_{2km}$ has a cyclic $m$-cycle system where
$S = \{1, 2, \ldots, mk\} \setminus \{k, 2k, \ldots,
mk\}$.

Let $A = [a_{i,j}]$ be the $(k-1)
\times m$  array
\begin{eqnarray*} {\footnotesize
\left[ \begin{array}{lllllllll}
   k-1    & 2k-1   & 3k-1    & 4k-1   &        &   (m-1)k-1 & mk-1 \\
   \vdots & \vdots & \vdots  & \vdots & \cdots &   \vdots   & \vdots \\
   2      & k+2    & 2k+2    & 3k+2   &        &  (m-2)k+2 & (m-1)k+2\\
   1      & k+1    & 2k+1    & 3k+1   &        &   (m-2)k+1 & (m-1)k+1 \\
                     \end{array}
              \right].}
\end{eqnarray*}

It is straightforward to verify that $A$ satisfies
\[ \sum_{j \equiv 0,1\ (\mod 4)} a_{i,j} = \sum_{j \equiv 2,3\ (\mod 4)}
a_{i,j},\]
and \[a_{i,1} < a_{i,2} < \ldots < a_{i, m}\] for  each $i$ with
$1\le i \le k$.

For each $i = 1, 2, \ldots, k-1$, the $m$-tuple
$$(a_{i,1}, -a_{i,3}, a_{i,5}, -a_{i,7}, \ldots, a_{i,m-3},
-a_{i,m-1}, -a_{i,m-2}, a_{i,m-4}, -a_{i,m-6}, \ldots, -a_{i,6},
a_{i,4}, -a_{i,2}, a_{i,m})$$ is a difference $m$-tuple and
corresponds to an $m$-cycle $C_i$ with $\ell(C_i) = \{a_{i,1},
a_{i,2}, \ldots,
a_{i,m}\}$.  Hence, $X = \{C_1, C_2, \ldots,
C_{k-1}\}$ is a minimum generating set for a cyclic $m$-cycle system of
$\langle S\rangle_{2km}$.
\end{proof}

\section{The Case when $m \equiv 4 \ (\mod 8)$}

In this section, we consider the case when $m \equiv 4\ (\mod 8)$
and show that there exists a cyclic $m$-cycle system of $K_{mt}-I$
for each $t \ge 1$. We begin with the case when $t$ is odd, say $t = 2k+1$ for some nonnegative integer $k$.

\begin{lemma} \label{k-odd}
For each nonnegative integer $k$ and each $m \equiv 4\ (\mod 8)$, there
exists a cyclic $m$-cycle system of $K_{m(2k+1)} - I$.
\end{lemma}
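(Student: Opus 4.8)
The plan is to partition the edge–length set of $K_{m(2k+1)}-I$ into the multiples of $t:=2k+1$ and the non‑multiples of $t$, and to decompose the two resulting circulant graphs separately. Write $n=mt$, so $K_n-I=\langle S\rangle_n$ with $S=\{1,2,\dots,n/2-1\}$, and set $S_1=\{t,2t,\dots,(m/2-1)t\}$ and $S_2=S\setminus S_1$. Then $\langle S_1\rangle_n$ is $t$ vertex‑disjoint copies of $\langle\{1,\dots,m/2-1\}\rangle_m=K_m-I$, one on each residue class modulo $t$, while $\langle S_2\rangle_n$ is the complete multipartite graph with $t$ parts of size $m$ (the same residue classes). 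I would build a cyclic $m$-cycle system of each piece and take the union.

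For $\langle S_1\rangle_n$: since $m\equiv 4\pmod 8$ we have $m/2\equiv 2\pmod 4$, so $m\neq 2p^{\alpha}$ for any odd prime $p$, and hence $K_m-I$ has a cyclic hamiltonian (that is, $m$-)cycle system by \cite{GM}. I would embed a minimum generating set for it into the subgroup $t\Z_n\subseteq\Z_n$ via $a\mapsto at$; as $\rho^j$ ranges over $\Z_n$ these translates run through all $t$ residue classes, and on each they induce exactly the $\Z_m$-action on the embedded system, so this yields a cyclic $m$-cycle system of $\langle S_1\rangle_n$. (If $k=0$ then $S_2=\emptyset$ and we are finished.)

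For $\langle S_2\rangle_n$ I would produce, for each $i=1,\dots,k$, a difference $m$-tuple whose length set $L_i$ is the union of the residue classes $i$ and $t-i$ modulo $t$, i.e.\ $L_i=\{\,jt+i:0\le j\le m/2-1\,\}\cup\{\,jt+(t-i):0\le j\le m/2-1\,\}$. Since $t=2k+1$, the pairs $\{i,t-i\}$ for $1\le i\le k$ partition $\{1,\dots,t-1\}$, so the $L_i$ partition $S_2$ and $|L_i|=m$. As $2i<t$, the elements of $L_i$ in increasing order are $u_1<\dots<u_m$ with $u_{2s-1}=(s-1)t+i$ and $u_{2s}=st-i$ for $s=1,\dots,m/2$, and a short computation (using that $m/2$ is even) gives $\sum_{j\equiv 0,1\ (\mod 4)}u_j=\sum_{j\equiv 2,3\ (\mod 4)}u_j$. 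These are precisely the two properties of the rows of the array $A$ used in Section~3, so I expect the same signed ordering $(u_1,-u_3,u_5,-u_7,\dots,u_{m-3},-u_{m-1},-u_{m-2},u_{m-4},-u_{m-6},\dots,-u_6,u_4,-u_2,u_m)$ to be a difference $m$-tuple, corresponding to an $m$-cycle $C_i$ with $\ell(C_i)=L_i$; then $\{C_1,\dots,C_k\}$ generates a cyclic $m$-cycle system of $\langle S_2\rangle_n$, and combining the two systems yields the desired cyclic $m$-cycle system of $K_n-I$. The one genuinely nontrivial point is verifying that this signed tuple satisfies condition (iv) — that all partial sums are distinct modulo $n$ — but this goes through exactly as in Section~3, since the hypotheses used there (entries positive, strictly increasing, at most $n/2-1$, and satisfying the displayed half‑sum identity) all hold here; everything else (the two structural identifications and the applicability of \cite{GM}) is routine.
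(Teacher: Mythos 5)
Your proposal is correct and follows essentially the same route as the paper: strip out the lengths divisible by $t=2k+1$ (handled as $t$ vertex-disjoint copies of $K_m-I$ via \cite{GM}, using $m\equiv 4\pmod 8$ to rule out $m=2p^{\alpha}$), and cover the remaining lengths with $k$ difference $m$-tuples obtained by the standard signed reordering of an increasing sequence satisfying the half-sum identity. The only difference is cosmetic — you pair the residue classes $i$ and $t-i$ modulo $t$, whereas the paper's array pairs $i$ with $k+i$ — and your choice checks out (the partial sums of your tuple are exactly the values $jt+i$ and $-jt$ for suitable ranges of $j$, which are visibly distinct modulo $mt$).
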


\noindent
\begin{proof}
Let $m$ and $k$ be nonnegative integers such that $m \equiv 4\ (\mod
8)$.  Since $K_m - I$ has a cyclic hamiltonian cycle system \cite {GM}, we may assume that $k \ge 1$. Let $m = 4r$ for some positive integer $r$. Then $K_{m(2k+1)}
-I = \langle S'\rangle_{(2k+1)m}$ where $S' = \{1, 2, \ldots, 4rk+2r-1\}$.
Again, since $K_m - I$ has a cyclic hamiltonian cycle system \cite {GM} and
$\langle \{2k+1, 4k+2, \ldots, (2r-1)(2k+1)\}\rangle_{(2k+1)m}$ consists of
$2k+1$ vertex-disjoint copies of $K_m - I$, we need only show that
$\langle S\rangle_{(2k+1)m}$ has a cyclic $m$-cycle system where
$$S = \{1, 2, \ldots, 4rk+2r-1\} \setminus \{2k+1, 4k+2, \ldots,
(2r-1)(2k+1)\}.$$

Let $r$ and $k$ be positive integers.  Let $A = [a_{i,j}]$ be the $k
\times m$  array
\begin{eqnarray*} {\footnotesize
\left[ \begin{array}{lllllllll}
                      k & 2k & 3k+1 & 4k+1 & 5k+2 & & (4r-2)k+2r -2 &
                      (4r-1)k+2r-1 & 4rk+2r-1 \\
                      \vdots & \vdots & \vdots & \vdots & \vdots & \cdots &
\vdots & \vdots & \vdots \\
                2 & k+2 & 2k+3 & 3k+3 & 4k + 4 & & (4r-3)k+2r &
(4r-2)k+2r+1 & (4r-1)k+2r+1\\
                1 & k+1 & 2k+2 & 3k+2 &4k+3 & & (4r-3)k+2r-1 & (4r-2)k+2r &
(4r-1)k+2r \\
                     \end{array}
              \right].}
\end{eqnarray*}

It is straightforward to verify that $A$ satisfies
\[ \sum_{j \equiv 0,1\ (\mod 4)} a_{i,j} = \sum_{j \equiv 2,3\ (\mod 4)}
a_{i,j},\]
and \[a_{i,1} < a_{i,2} < \ldots < a_{i, m}\] for  each $i$ with
$1\le i \le k$.

For each $i = 1, 2, \ldots, k$, the $m$-tuple
$$(a_{i,1}, -a_{i,3}, a_{i,5}, -a_{i,7}, \ldots, a_{i,m-3},
-a_{i,m-1}, -a_{i,m-2}, a_{i,m-4}, -a_{i,m-6}, \ldots, -a_{i,6},
a_{i,4}, -a_{i,2}, a_{i,m})$$ is a difference $m$-tuple and
corresponds to an $m$-cycle $C_i$ with $\ell(C_i) = \{a_{i,1},
a_{i,2}, \ldots,
a_{i,m}\}$.  Hence, $X = \{C_1, C_2, \ldots,
C_k\}$ is a minimum generating set for a cyclic $m$-cycle system of
$K_{m(2k+1)}-I$.

\end{proof}

We now handle the case when $t$ is even, say $t = 2k$ for some positive integer $k$.

\begin{lemma}
For each positive integer $k$ and each $m \equiv 4\ (\mod 8)$, there
exists a cyclic $m$-cycle system of $K_{2mk} - I$.
\end{lemma}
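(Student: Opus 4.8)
The statement to prove is that for each positive integer $k$ and each $m\equiv 4\pmod 8$, there exists a cyclic $m$-cycle system of $K_{2mk}-I$. The natural approach mirrors the structure used for the $m\equiv 0\pmod 8$ case and for the odd-$t$ case in Lemma \ref{k-odd}: decompose the length set of $K_{2mk}-I$ into pieces that can each be handled by a known ingredient. Writing $n=2mk$, we have $K_{2mk}-I=\langle\{1,2,\dots,mk-1\}\rangle_{2mk}$ (the removed $1$-factor has length $mk$). The plan is to split the lengths $\{1,2,\dots,mk-1\}$ into the multiples of $k$ that are less than $mk$, namely $\{k,2k,\dots,(m-1)k\}$, and the non-multiples $S=\{1,2,\dots,mk\}\setminus\{k,2k,\dots,mk\}$. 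The subgraph $\langle\{k,2k,\dots,(m-1)k\}\rangle_{2mk}$ is exactly $k$ vertex-disjoint copies of $\langle\{1,2,\dots,m-1\}\rangle_{2m}=K_{2m}-I$, and a cyclic $m$-cycle system of each copy is obtained from the $m\equiv0$-type construction — wait, here $m\equiv4\pmod8$, so I instead invoke the known result (Lemma \ref{k-odd} with $k=1$, i.e. the case $t=1$, or directly \cite{GM}) that $K_{2m}-I$ admits a cyclic $m$-cycle system when $m\equiv4\pmod 8$; more carefully, $t=2$ with $m\equiv4\pmod8$ is itself a base case that must be established, so one should check whether Lemma \ref{k-odd} or an earlier lemma already covers $K_{2m}-I$ for $m\equiv 4\pmod 8$, and if not, handle it as the genuine base case $k=1$ here.

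So the real content is to produce a cyclic $m$-cycle system of $\langle S\rangle_{2mk}$ where $S=\{1,2,\dots,mk\}\setminus\{k,2k,\dots,mk\}$. Exactly as in the preceding two lemmas, I would arrange the $m(k-1)$ elements of $S$ into a $(k-1)\times m$ array $A=[a_{i,j}]$ whose entry pattern is $a_{i,j}=(j-1)k+i$ adjusted so that the $i$-th row consists of the $m$ smallest available residues in its congruence class, i.e. each row is an increasing list $a_{i,1}<a_{i,2}<\dots<a_{i,m}$ covering all the non-multiples-of-$k$ lengths once across all rows, with the property
\[
\sum_{j\equiv0,1\ (\mathrm{mod}\ 4)}a_{i,j}=\sum_{j\equiv2,3\ (\mathrm{mod}\ 4)}a_{i,j}.
\]
Then for each $i$ I would form the $m$-tuple
\[
(a_{i,1},-a_{i,3},a_{i,5},-a_{i,7},\dots,a_{i,m-3},-a_{i,m-1},-a_{i,m-2},a_{i,m-4},-a_{i,m-6},\dots,-a_{i,6},a_{i,4},-a_{i,2},a_{i,m})
\]
and verify it is a difference $m$-tuple: condition (i) is automatic, (ii) holds since the $|a_{i,j}|$ are distinct (distinct rows use distinct residues mod $k$, distinct columns are spaced by $k$), (iii) is the alternating-sum identity above together with the final $+a_{i,m}$ closing the cycle to $0$ — one must check the telescoping sum vanishes mod $2mk$, and (iv) requires that no two partial sums coincide mod $2mk$, which is where the sign pattern and the bound $a_{i,j}<mk$ are used. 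The resulting cycles $C_1,\dots,C_{k-1}$ together with the $m$-cycle systems on the $k$ disjoint copies of $K_{2m}-I$ form the desired minimum generating set.

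The main obstacle — and the step deserving the most care — is pinning down the exact array $A$ so that the alternating column sums balance \emph{and} the partial-sum condition (iv) holds simultaneously; the arrays in Lemma \ref{0mod8_2m}'s sequel and in Lemma \ref{k-odd} are tailored to $m\equiv0$ and the odd-$t$ situation respectively, and for $m\equiv4\pmod 8$ with even $t$ the shift of which residues are "missing" (multiples of $k$) changes the arithmetic, so the column entries will need a slightly different offset scheme (likely splitting into the blocks $j=1,\dots,4$ behaving like the low residues and $j\ge5$ filling in by blocks of four, as the displayed arrays suggest). I would first write down $A$ explicitly for small $r$ (recall $m=4r$) to get the pattern right, then verify the two row-identities in general by a direct computation grouping columns in fours, and finally check (iv) by observing that along the tuple the partial sums first increase through a range of consecutive-ish values and then decrease, staying strictly inside an interval of length less than $2mk$, so distinctness mod $2mk$ follows. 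Everything else is bookkeeping parallel to the earlier lemmas.
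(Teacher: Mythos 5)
Your reduction has a circularity that the paper deliberately avoids. You peel off the multiples of $k$ to get $k$ vertex-disjoint copies of $K_{2m}-I$, but a cyclic $m$-cycle system of $K_{2m}-I$ for $m\equiv 4\pmod 8$ is exactly the $k=1$ instance of the lemma you are proving; it is not supplied by Lemma \ref{k-odd} (odd $t$ only), by the $m\equiv 0\pmod 8$ lemmas, or by \cite{GM} (which gives hamiltonian, i.e.\ $2m$-cycle, systems of $K_{2m}-I$). You flag this yourself but then leave it as ``handle it as the genuine base case,'' which is precisely the hard part: since $|\{1,\dots,m-1\}|=m-1$ is not a multiple of $m$, the base case cannot be done with full-orbit difference $m$-tuples alone and needs short-orbit cycles, and no such construction appears in your plan. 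So the proposal as written does not prove the statement.

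The paper's proof sidesteps this by removing the multiples of $2k$ instead of $k$: $\langle\{2k,4k,\dots,(2r-1)(2k)\}\rangle_{2km}$ (with $m=4r$) is $2k$ disjoint copies of $K_m-I$, whose cyclic \emph{hamiltonian} cycle system is already known from \cite{GM} because $m\equiv 4\pmod 8$ forces $m\neq 2p^{\alpha}$. The price is that the leftover set $S$ now has $m(k-1)+m/2$ elements rather than a clean multiple of $m$, so after a $(k-1)\times m$ array of the kind you describe (which handles a subset $T$ with $|T|=m(k-1)$), there remains a set $B$ of $m/2$ lengths of the form $4(i-1)k\pm 1$; these are covered by $r$ cycles in orbits of length $4k$, each generated by a path of length $2$ ending at $\pm 4k$, exactly as in the $t=2$ part of the $m\equiv 0\pmod 8$ case. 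If you want to salvage your outline, you must either adopt that reduction or supply an explicit short-orbit construction for $K_{2m}-I$ with $m\equiv 4\pmod 8$; the array portion of your plan for $k\geq 2$ is otherwise in the spirit of the paper.
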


\noindent
\begin{proof}
As before, let $m$ and $k$ be positive integers such that $m \equiv
4\ (\mod 8)$. Thus $m = 4r$ for some positive integer $r$. Then
$K_{2mk} -I = \langle S' \rangle_{2km}$ where $S' = \{1, 2, \ldots, 4rk-1\}$.
Since $K_m - I$ has a cyclic hamiltonian cycle system \cite {GM} and
$\langle \{2k, 4k, \ldots, (2r-1)(2k)\}\rangle_{2km}$ consists of $2k$
vertex-disjoint copies of $K_m - I$, we need only show that $\langle S \rangle_{2km}$ has a cyclic $m$-cycle system where
$$S = \{1, 2, \ldots, 4rk-1\} \setminus \{2k, 4k, \ldots,
(2r-1)(2k)\}.$$  Since $|S|= m(k-1) + m/2$, we will start by
partitioning a subset $T \subseteq S$ with $|T|=m(k-1)$ into $k-1$
difference $m$-tuples.

Let $T = \{1, 2, \ldots,
4rk-1\} \setminus \{1, 2k, 4k-1, 4k, 4k+1, 6k, 8k-1, 8k, 8k +1,
\ldots, (4r-4)k-1, (4r-4)k, (4r-4)k+1, (4r-2)k, 4rk-1\},$ and
observe that $|T|=(k-1)m$. Let $A = [a_{i,j}]$, with entries from
the set $T$, be the $(k-1) \times m$ array
\begin{eqnarray*} {\footnotesize
\left[ \begin{array}{llllllllllllll}
                k & 2k-1   & 3k-1 & 4k-2   & 5k     &6k-1    & 7k-1 & 8k-2
& 9k & \\
           \vdots & \vdots&\vdots & \vdots & \vdots & \vdots & \vdots &
\vdots & \vdots & \cdots \\
                3 & k+2 &  2k+2 & 3k+1     & 4k + 3 &5k+2    &6k+2 & 7k+1 &
8k+3 & \\
                2 & k+1 &  2k+1 & 3k       &4k+2    & 5k+1   & 6k+1 & 7k &
8k+2 & \\
                     \end{array}
              \right. }
\end{eqnarray*}
\begin{eqnarray*} {\footnotesize
\left. \begin{array}{llllllllllllll}
                      & (4r-3)k &(4r-2)k-1 & (4r-1)k-1 & 4rk-2 \\
                      \cdots & \vdots & \vdots & \vdots & \vdots \\
                      & (4r-4)k+3 & (4r-3)k+2 & (4r-2)k+2 & (4r-1)k+1\\
                      & (4r-4)k+2 &(4r-3)k+1 & (4r-2)k+1 & (4r-1)k \\
                     \end{array}
              \right].}
\end{eqnarray*}
It is straightforward to verify that the array $A$ satisfies
\[ \sum_{j \equiv 0,1\ (\mod 4)} a_{i,j} = \sum_{j \equiv 2,3\ (\mod 4)}
a_{i,j},\]
and \[a_{i,1} < a_{i,2} < \ldots < a_{i, m}\] for  each $i$ with
$1\le i \le k-1$.

For each $i = 1, 2, \ldots, k-1$, the $m$-tuple
$$(a_{i,1}, -a_{i,3}, a_{i,5}, -a_{i,7}, \ldots, a_{i,m-3},
-a_{i,m-1}, -a_{i,m-2}, a_{i,m-4}, -a_{i,m-6}, \ldots, -a_{i,6},
a_{i,4}, -a_{i,2}, a_{i,m})$$ is a difference $m$-tuple and
corresponds to an $m$-cycle $C_i$ with $\ell(C_i) = \{a_{i,1},
a_{i,2}, \ldots,
a_{i,m}\}$.  Hence, $X = \{C_1, C_2, \ldots,
C_{k-1}\}$ is a minimum generating set for a cyclic $m$-cycle system of
$\langle T\rangle_{2km}$.

It now remains to find a minimum generating set for a cyclic $m$-cycle
system of $\langle B \rangle_{2km}$ where $B= \{1, 4k-1, 4k+1, 8k-1, 8k+1,
\ldots, (4r-4)k-1, (4r-4)k+1, 4rk-1\}$.  For $i = 1, 2, \ldots, r$,
define $d_{2i-1} = 4(i-1)k+1$ and $d_{2i}=4ik-1$. Observe that $B =
\{d_1, d_2, \ldots, d_{2r}\}$ and $d_{j+2} - d_j = 4k$ for $j = 1,
2, \ldots, 2r-2$. Suppose first that $r$ is even.  For $i = 1, 2,
\ldots, r$, let $P_i: 0, d_{2i+1}, 4k$ if $i$ is odd and let $P_i:
0, d_{2i}, 4k$ if $i$ is even. Let $C_i' = P_i \cup \rho^{4k}(P_i)
\cup \rho^{8k}(P_i) \cup \cdots \cup \rho^{(2m-4)k}(P_i)$, and note
that $C_i'$ is an $m$-cycle with $\ell(C_i') = \{d_{2i-1},
d_{2i+1}\}$ if $i$ is odd or $\ell(C_i') = \{d_{2i-2}, d_{2i}\}$ if
$i$ is even. Now suppose $r$ is odd. Let $P_1: 0, 1, 4k$, and let
$P_i: 0, d_{2i+1}, 4k$ if $i$ is even and let $P_i: 0, d_{2i}, 4k$
if $i$ is odd.  Define $C_i'$ as in the case when $r$ is even and
note that $\ell(C_1') =\{1, 4k-1\}$, $\ell(C_i')= \{d_{2i-1},
d_{2i+1}\}$ if $i$ is even, and $\ell(C_i')=\{d_{2i-2}, d_{2i}\}$ if
$i$ is odd.  In either case, $\ell(C_1') \cup \ell(C_2') \cup \cdots
\cup  \ell(C_r') = B$ so that $\{C_1', C_2', \ldots, C_r'\}$ is a
minimum generating set for $\langle B \rangle_{2km}$.
\end{proof}

\section{The Case when $m \equiv 2 \ (\mod 4)$}

In this section, we consider the case when $m \equiv 2\ (\mod 4)$
and prove parts (2) and (4) of Theorem~1.1.  We divide this proof into three parts, each dealt with in its own
subsection. First we consider the case $t\equiv 0\ (\mod 4)$. Then we consider the case $m \equiv 2\ (\mod 8)$ and $t \equiv 1\ (\mod 4)$. Finally we consider the case $m \equiv 6\ (\mod 8)$ and $t \equiv 3\ (\mod 4)$.

\subsection{The case when $t \equiv 0\ (\mod 4)$.} \hspace{1in}

\vspace*{12pt}
We consider the case $t \equiv 0\ (\mod 4)$, starting with the special case $t = 4$.

\begin{lemma} \label{4m}
For each positive integer $m\ge 6$ with $m \equiv 2 \pmod{4}$, there exists a cyclic $m$-cycle system of $K_{4m}-I$.
\end{lemma}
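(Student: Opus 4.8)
The plan is to work entirely inside the circulant graph $K_{4m}-I=\langle\{1,2,\dots,2m-1\}\rangle_{4m}$ and to produce an explicit minimum generating set, i.e.\ a partition of the length set $\{1,2,\dots,2m-1\}$ into the length-sets of finitely many $m$-cycles. Note first that the reduction used in Sections~3 and~4 --- peeling off $2r$ vertex-disjoint copies of $K_m-I$ via the lengths $\{k,2k,\dots\}$ --- is unavailable here, since when $m\equiv 6\pmod 8$ the graph $K_m-I$ has no cyclic hamiltonian cycle system by \cite{GM}; so the lengths must be handled directly, in a manner uniform over $m\equiv 2\pmod 4$.

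Write $m=4q+2$. Among the $2m-1=8q+3$ lengths there are $4q+2$ odd, $2q+1$ congruent to $2\pmod 4$, and $2q$ multiples of $4$. Lemma~\ref{distinct_length}(4) forces the multiples of $8$ (and of $4$ more generally) into cycles whose orbit length does not divide $8$, and shows that an orbit of length $4$ can carry only a single length $\ell$ with $\gcd(\ell,4m)=4$, while an orbit of length $8$ carries exactly two lengths of the \emph{same} parity (its end-vertex is even, of the form $8x$ with $\gcd(x,m/2)=1$). Guided by this, I would assemble the generating set, as in Lemma~\ref{0mod8_2m}, from three kinds of blocks: one difference $m$-tuple, giving a full-orbit cycle $C_0$ carrying $m=4q+2$ lengths; one ``doubled'' cycle $C_1=P\cup\rho^{2m}(P)$ with $P$ a path on $m/2=2q+1$ edges from $0$ to the vertex $2m$, hence of orbit length $2m$, carrying $2q+1$ further lengths; and exactly $q$ short-orbit cycles of orbit length $8$, each built from a $2$-edge path and carrying a pair of equal-parity lengths. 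The counts match, since $(4q+2)+(2q+1)+2q=8q+3$, and a parity count on the $4q+1$ even lengths --- each doubled cycle must contain an \emph{even} number of odd lengths, hence an odd number of even ones --- shows one needs an odd number (here, one) of doubled cycles. All $2q$ multiples of $4$ can be placed in $C_0$ and $C_1$, which have ample room, after which the verification reduces to three standard checks: that the $m$ lengths assigned to $C_0$ split into two halves of size $m/2$ with equal sums (possible since their total is even and the choice has slack), together with conditions (i)--(iv) for the corresponding difference $m$-tuple; that the path $P$ closes up at $2m$ with distinct partial sums; and that each pair $\{a,b\}$ assigned to a short cycle satisfies $a+b\equiv 8x\pmod{4m}$ or $a-b\equiv 8x\pmod{4m}$ for some $x$ coprime to $m/2$.

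I expect the generic construction to split according to $m\equiv 2$ versus $m\equiv 6\pmod 8$ (equivalently $q$ even versus odd), since the parity of $q$ affects how the odd lengths distribute between the two halves of $C_0$ and between $C_0$ and $C_1$, and changes which end-vertices $8x$ are admissible for the length-$2$ paths. The main obstacle is precisely this bookkeeping at the bottom end: once the multiples of $4$ and an optimal handful of odd lengths are committed to $C_0$ and $C_1$, the leftover odd lengths must still be pairable into valid length-$2$ paths, and for small $m$ --- certainly $m=6$, and likely $m=10$ and $m=14$ --- there is not enough slack, so these values of $m$ will need ad hoc $m$-cycles written out, exactly as the cases $r\le 3$ were treated in Lemma~\ref{0mod8_2m}. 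I would organize the write-up by first disposing of the small values of $m$ by hand, then giving, for generic $m$, an array $A$ with $a_1<a_2<\dots<a_m$ satisfying $\sum_{j\equiv 0,1}a_j=\sum_{j\equiv 2,3}a_j$ (for $C_0$), an explicit formula for the path $P$ (for $C_1$), and a list of length-pairs (for the short cycles), and finally checking, via Lemma~\ref{distinct_length}, that the resulting set of $m$-cycles is a minimum generating set.
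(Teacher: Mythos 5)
Your architecture is close in spirit to the paper's, and your counting and parity analysis are sound: the paper's proof of this lemma does indeed build a minimum generating set from one full-orbit cycle given by a difference $m$-tuple, together with short-orbit cycles obtained from $2$-edge paths whose translates under $\rho^8$ close up into $m$-cycles. However, the paper makes one structural choice that is simpler than yours: instead of your orbit-$2m$ ``doubled'' cycle $C_1=P\cup\rho^{2m}(P)$ carrying $m/2$ lengths, it uses the single orbit-$4$ cycle $C'=(0,4,8,\ldots,4m-4)$ carrying only the length $4$ (this costs nothing, since $\langle\{4\}\rangle_{4m}$ is automatically a disjoint union of $m$-cycles), and it loads the full-orbit cycle with the lengths $1,2,3$ and \emph{all} even lengths $6,8,\ldots,2m-2$ via the explicit tuple $(1,-2,6,-10,\ldots,2m-6,-(2m-2),-3,8,-12,\ldots,-(2m-8),2m-4)$. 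The leftover lengths are then exactly the odd lengths $5,7,\ldots,2m-1$, which pair off cleanly as $\{j,j+8\}$ (paths $0,\,j+8,\,8$), with a two-pair wrap-around adjustment ending at $-8$ when $m\equiv 6\pmod 8$. This sidesteps the construction of a length-$m/2$ path to $2m$ with distinct lengths and distinct partial sums, which in the analogous Lemma~\ref{0mod8_2m} required four separate subcases.

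The genuine gap is that your proposal is an architectural plan rather than a proof. In a result of this type the content \emph{is} the explicit constructions: the difference $m$-tuple for $C_0$ together with verification of conditions (i)--(iv), the formula for the path $P$ and the check that its partial sums are distinct, the explicit pairing of the leftover odd lengths into valid $2$-paths ending at admissible vertices $8x$, and the ad hoc treatment of the small $m$ you acknowledge will resist the generic scheme. You defer all of these (``I would assemble\ldots'', ``I expect the generic construction to split\ldots'', ``The main obstacle is precisely this bookkeeping''), and none is a mere formality --- the distribution of the $2q$ multiples of $4$ and of the odd lengths between $C_0$, $C_1$, and the short cycles must be chosen so that \emph{simultaneously} the two halves of the $m$-tuple have equal sums, the partial sums avoid collisions modulo $4m$, and the remaining odds admit a perfect matching into pairs with $\pm a\pm b\equiv 8x$. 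Until those choices are exhibited and checked, the lemma is not proved; I recommend either carrying out your scheme in full (expect case splits on $m\bmod 8$ and separate arguments for $m\in\{6,10,14\}$) or adopting the paper's simpler split, where the only nontrivial verification is that the displayed difference $m$-tuple has distinct partial sums.
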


\begin{proof} Let $m \ge 6$ be a positive integer with $m \equiv 2\ (\mod 4)$. Then $K_{4m}
-I = \langle  S' \rangle_{4m}$ where $S' = \{1, 2, \ldots, 2m-1\}$. The proof proceeds as follows.  We begin by finding one difference $m$-tuple which corresponds to an $m$-cycle $C$ with $|\ell(C)|= m$. Note that $\langle\{4\} \rangle_{4m}$ consists of four vertex disjoint $m$-cycles. For the remaining $m-2$ edge lengths in $S' \setminus (\ell(C)\cup \{4\})$, we find $(m-2)/2$ paths $P_i$ of length $2$, ending at vertex $8$ or $-8$, so that $C_i = P_i \cup \rho^8(P_i) \cup \rho^{16}(P_i) \cup \cdots \rho^{4m-8}(P_i)$ is an $m$-cycle.  Then this collection of cycles will give a minimum generating set for a cyclic $m$-cycle system of $K_{2m}-I$.

Consider the difference $m$-tuple
$$(1,-2,6,-10,\ldots,2m-6,-(2m-2), -3,8,-12,\ldots,2m-12,-(2m-8),2m-4)$$
and the corresponding $m$-cycle $C$ with $\ell(C)=\{1,2,3,6,8,\ldots, 2m-2\}$.  It is straightforward to verify that the odd vertices visited all lie between $-m+1$ and $m-1$ with no duplication.  Similarly, the even vertices visited all lie between $-2m+4$ and $-4$, and have no duplication.

Let $C' = (0, 4, 8, \ldots, 4m-4)$ and note that $C'$ is an $m$-cycle with $\ell(C') = \{4\}$.

Let $m=8k+m'$, so $m'$ is either $2$ or $6$. If $k=0$, then $m' = 6$ and let $P: 0, 13, 8$ be the path of length $2$ with edge lengths  $11, 5$. Then, $C'' = P \cup \rho^8(P) \cup \rho^{16}(P)$ is a $6$-cycle with $\ell(C'') = \{11, 5\}$. Then $\{C, C', C''\}$ is a minimum generating set for cyclic $6$-cycle system of $K_{24}-I$.  Now suppose that $k \ge 1$.  For $0 \le i \le k-1$, let $P_i: 0, 13+16i, 8$ be the path of length $2$ with edge lengths  $13+16i, 5+16i$; let $P_i': 0, 15+16i, 8$ be the path of length $2$ with edge lengths  $15+16i,7+16i$; let $P_i'': 0, 17+16i, 8$ be the path of length $2$ with edge lengths $17+16i,9+16i$; and let $P_i''':0, 19+16i, 8$ with edge lengths $19+16i,11+16i$.  Let $C_i = P_i \cup \rho^8(P_i) \cup \rho^{16}(P_i) \cup \cdots \rho^{4m-8}(P_i)$, $C_i' = P_i' \cup \rho^8(P_i') \cup \rho^{16}(P_i') \cup \cdots \rho^{4m-8}(P_i')$, $C_i'' = P_i'' \cup \rho^8(P_i'') \cup \rho^{16}(P_i'') \cup \cdots \rho^{4m-8}(P_i'')$, and $C_i''' = P_i''' \cup \rho^8(P_i''') \cup \rho^{16}(P_i''') \cup \cdots \rho^{4m-8}(P_i''')$  and note that each is an $m$-cycle with $\ell(C_i)=\{5+16i,13+16i\}$, $\ell(C_i')=\{7+16i,15+16i\}$, $\ell(C_i'')=\{9+16i,17+16i\}$, and $\ell(C_i''')=\{11+16i,19+16i\}$.

If $m'=2$, then $\{C,C', C_0,C_0', C_0'', C_0''', \ldots, C_{k-1},C_{k-1}',C_{k-1}'', C_{k-1}'''\}$ is a minimum generating set for a cyclic $m$-cycle system of $K_{4m}-I$ .  If $m'=6$, then let $P_k: 0, 2m-1, -8$ and $P_k': 0, 2m-3, -8$ be paths of length $2$ with $\ell(P_k)= \{2m-1, 2m-7\}$ and $\ell(P_k')= \{2m-3, 2m-5\}$.  Let $C_k = P_k \cup \rho^8(P_k) \cup \rho^{16}(P_k) \cup \cdots \rho^{4m-8}(P_k)$ and $C_k' = P_k' \cup \rho^8(P_k') \cup \rho^{16}(P_k') \cup \cdots \rho^{4m-8}(P_k')$ and observe that each is an $m$-cycle with $\ell(C_k) = \{2m-1, 2m-7\}$ and $\ell(C_k')= \{2m-3, 2m-5\}$.  Thus, $\{C,C', C_0,C_0', C_0'', C_0''', \ldots, C_{k-1},C_{k-1}',C_{k-1}'', C_{k-1}''', C_k, C_k'\}$  is a minimum generating set for a cyclic $m$-cycle system of $K_{4m}-I$.
\end{proof}

We now consider the case when $t \equiv 0\ (\mod 4)$ with $t > 4$.

\begin{lemma}
For each positive integer $k$ and each positive integer $m \equiv 2\ (\mod 4)$ with $m \ge 6$, there
exists a cyclic $m$-cycle system of $K_{4mk} - I$.
\end{lemma}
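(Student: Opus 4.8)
The plan is to settle this case by the same two-step template used in Sections 3 and 4: first reduce $K_{4mk}-I$ to a smaller circulant via a subgroup composition that lifts the base case (Lemma~\ref{4m}), and then dispose of the remaining edge lengths with difference $m$-tuples arranged in an array.

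First I would write $K_{4mk}-I=\langle S'\rangle_{4mk}$ with $S'=\{1,2,\dots,2mk-1\}$. Lemma~\ref{4m} handles $k=1$, so assume $k\ge 2$. The subgroup $\langle k\rangle$ of $\mathbb Z_{4mk}$ has order $4m$ and index $k$, and $\langle\{k,2k,\dots,(2m-1)k\}\rangle_{4mk}$ consists of $k$ vertex-disjoint copies of $\langle\{1,\dots,2m-1\}\rangle_{4m}=K_{4m}-I$; since each such copy has a cyclic $m$-cycle system by Lemma~\ref{4m}, and multiplying the differences of a minimum generating set for $K_{4m}-I$ by $k$ yields a minimum generating set for a cyclic $m$-cycle system of this subgraph, it remains only to produce a cyclic $m$-cycle system of $\langle S\rangle_{4mk}$, where $S=S'\setminus\{k,2k,\dots,(2m-1)k\}$. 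Counting, $|S|=(2mk-1)-(2m-1)=2m(k-1)$, so by the difference-$m$-tuple machinery of Section~2 it suffices to partition $S$ into $2(k-1)$ difference $m$-tuples.

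The core of the argument is then to build a $2(k-1)\times m$ array $A=[a_{i,j}]$ whose entries are exactly the elements of $S$, with each row strictly increasing and each row satisfying $\sum_{j\equiv 0,1\ (\mathrm{mod}\ 4)}a_{i,j}=\sum_{j\equiv 2,3\ (\mathrm{mod}\ 4)}a_{i,j}$; then each row converts to a difference $m$-tuple (via a signed interleaving of its columns, like those in Sections 3 and 4 but with the signs and the order of the last two columns adjusted to $m\equiv 2\pmod 4$), producing an $m$-cycle $C_i$ with $\ell(C_i)=\{a_{i,1},\dots,a_{i,m}\}$, and $\{C_1,\dots,C_{2(k-1)}\}$ is a minimum generating set for $\langle S\rangle_{4mk}$. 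To get the array, note that $S$ is the disjoint union of the $2m$ blocks $B_\ell=\{\ell k+1,\ell k+2,\dots,\ell k+k-1\}$, $\ell=0,1,\dots,2m-1$, each of size $k-1$; one forms the array by placing two blocks in each of its $m$ columns according to two increasing index sequences $\alpha,\beta\colon\{1,\dots,m\}\to\{0,\dots,2m-1\}$ that together partition $\{0,\dots,2m-1\}$, so that column $j$ is $B_{\alpha(j)}$ (in rows $1,\dots,k-1$) stacked above $B_{\beta(j)}$ (in rows $k,\dots,2(k-1)$). Row monotonicity is automatic from the monotonicity of $\alpha$ and $\beta$, and because the residues $1,2,\dots,m$ split evenly between $\{0,1\}$ and $\{2,3\}$ modulo $4$, the balance condition for every row reduces to the two numerical identities $\sum_{j\equiv 0,1}\alpha(j)=\sum_{j\equiv 2,3}\alpha(j)$ and $\sum_{j\equiv 0,1}\beta(j)=\sum_{j\equiv 2,3}\beta(j)$. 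One then exhibits $\alpha,\beta$ explicitly meeting these; for instance when $m=6$ one may take $\alpha=(0,1,3,6,7,9)$ and $\beta=(2,4,5,8,10,11)$, and a regular pattern of this kind can be written down for every $m\equiv 2\pmod 4$.

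The main obstacle is precisely the choice of $\alpha$ and $\beta$. For $m\equiv 2\pmod 4$ the balance condition cannot be arranged ``block by block'': if columns $1,\dots,m-2$ were internally balanced in four-column groups, then columns $m-1$ and $m$ (carrying opposite signs) would be forced to have equal entries in every row, which is impossible since all entries of $A$ are distinct. Hence the sequences must be designed globally, and producing a uniform, easily-checked description of them is where the real work lies. Once the array is in hand, the remaining verifications are routine: that each row yields a genuine difference $m$-tuple (in particular that its partial sums are distinct modulo $4mk$, i.e.\ condition (iv)), and that the lengths used by the $k$ lifted copies of $K_{4m}-I$ together with those used by the $2(k-1)$ array tuples exhaust $\{1,\dots,2mk-1\}$ with no repetition — which follows from the block count $k(2m-1)+2m(k-1)=2mk-1$.
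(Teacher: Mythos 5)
Your reduction to $\langle S\rangle_{4mk}$ with $S=\{1,\dots,2mk-1\}\setminus\{k,2k,\dots,(2m-1)k\}$ is exactly the paper's first step, and the general strategy (a $(2k-2)\times m$ array whose rows become difference $m$-tuples) is also the right one. But the proof is not complete: the entire content of the lemma lies in actually exhibiting the array, and you have not done so. You reduce the problem to finding two increasing index sequences $\alpha,\beta$ partitioning $\{0,\dots,2m-1\}$ with $\sum_{j\equiv 0,1}\alpha(j)=\sum_{j\equiv 2,3}\alpha(j)$ and likewise for $\beta$, give a single instance ($m=6$), and then assert that ``a regular pattern of this kind can be written down for every $m\equiv 2\pmod 4$,'' adding that this ``is where the real work lies.'' That is an accurate self-assessment: without a general, verified construction of $\alpha$ and $\beta$, the lemma is not proved. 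A second unproved step hides in the phrase ``signs \dots adjusted to $m\equiv 2\pmod 4$'': for $m\equiv 2\pmod 4$ the interleaving of Sections 3 and 4 is not compatible with the symmetric balance $\sum_{j\equiv 0,1}a_{i,j}=\sum_{j\equiv 2,3}a_{i,j}$ (the other lemmas of Section 5 use the asymmetric condition $a_{i,2}+\sum_{j\equiv 0,1}a_{i,j}=a_{i,1}+\sum_{j\equiv 2,3}a_{i,j}$ with $j\ge 3$, and the paper's proof of this very lemma uses yet another tuple), so you must actually write down a sign pattern whose total vanishes under your balance condition and whose partial sums are distinct modulo $4mk$; the latter is not automatic when consecutive entries of a row can differ by as much as about $mk$, which your block placement permits.

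For comparison, the paper sidesteps your $\alpha,\beta$ problem entirely: it writes down an explicit $2k\times m$ array whose rows are (apart from a deliberately perturbed second column) arithmetic-like, with consecutive columns differing by $2k$, deletes rows $1$ and $k+1$ so that the remaining entries are exactly $S$, and imposes local four-term conditions $a_{i,j}+a_{i,j+3}=a_{i,j+1}+a_{i,j+2}$ for $j\equiv 3\pmod 4$ together with one boundary condition $a_{i,1}+a_{i,2}+a_{i,m-3}+a_{i,m-1}=a_{i,m-2}+a_{i,m}$, matched to the tuple $(a_{i,1},a_{i,2},-a_{i,4},a_{i,6},\dots,a_{i,3},a_{i,m-1})$; the local conditions also keep the partial sums under control. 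To salvage your approach you would need to (i) give $\alpha,\beta$ explicitly for all $m\equiv 2\pmod 4$ and (ii) specify and verify the corresponding difference $m$-tuple; as written, the argument is a plausible plan rather than a proof. (A minor point: the final count should read $(2m-1)+2m(k-1)=2mk-1$, since the $k$ lifted copies of $K_{4m}-I$ jointly occupy the $2m-1$ lengths $k,2k,\dots,(2m-1)k$, not $k(2m-1)$ of them.)
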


\begin{proof}
Let $m\ge 6$ and $k$ be positive integers such that $m \equiv 2\ (\mod
4)$. Lemma \ref{4m} handles the case when $k = 1$ and thus we may assume that $k \ge 2$. Then $K_{4km}
-I = \langle S' \rangle_{4km}$ where $S' = \{1, 2, \ldots, 2km-1\}$.
Since $K_{4m} - I$ has a cyclic $m$-cycle system by Lemma \ref{4m} and
$\langle \{k, 2k, \ldots, 2km\}\rangle_{4km}$ consists of
$k$ vertex-disjoint copies of $K_{4m} - I$, we need only show that
$\langle S\rangle_{2km}$ has a cyclic $m$-cycle system where
$S = \{1, 2, \ldots, 2km\} \setminus \{k, 2k, \ldots,
2km\}$.

Let $A = [a_{i,j}]$ be the $2k
\times m$  array
\begin{eqnarray*} {\footnotesize
\left[ \begin{array}{lllllllll}
   2k    & 4k   & 6k    & 8k   &        &   (m-1)2k & 2km \\
   2k-1 & 2k+1 & 6k-1 & 8k-1 &          & (m-1)2k-1 & 2km - 1 \\
   \vdots & \vdots & \vdots  & \vdots & \cdots &   \vdots   & \vdots \\
   2      & 4k-2   & 4k+2    & 6k+2   &        &  (m-2)2k+2 & (m-1)2k+2\\
   1      & 4k-1    & 4k+1    & 6k+1   &        &   (m-2)2k+1 & (m-1)2k+1 \\
                     \end{array}
              \right].}
\end{eqnarray*}
(Observe that the second column does not follow the same pattern as the others.)

Let $A'$ be the $(2k-2) \times m$ array obtained from $A$ by deleting rows $1$ and $k+1$. Then the entries in $A'$ are precisely the elements of $S$. Also, it is straightforward to verify that $A'$ satisfies
\[ a_{i,j} + a_{i, j+3} = a_{i, j+1} + a_{i, j+2}\]
for each positive integer $j \equiv 3\ (\mod 4)$ with $j \le m-3$, $$a_{i,1} + a_{i,2} + a_{i, m-3} + a_{i,m-1} = a_{i, m-2} + a_{i, m},$$
and \[a_{i,1} < a_{i,2} < \ldots < a_{i, m}\] for  each $i$ with
$1\le i \le 2k-2$.

For each $i = 1, 2, \ldots, 2k-2$, the $m$-tuple
$$(a_{i,1}, a_{i, 2},  -a_{i, 4}, a_{i,6}, -a_{i,8}, a_{i,10}, \ldots, -a_{i,m-2},
-a_{i,m}, a_{i,m-3}, -a_{i,m-5}, a_{i,m-7}, \ldots, a_{i,3}, a_{i,m-1})$$ is a difference $m$-tuple and
corresponds to an $m$-cycle $C_i$ with $\ell(C_i) = \{a_{i,1},
a_{i,2}, \ldots,
a_{i,m}\}$.  Hence, $X = \{C_1, C_2, \ldots,
C_{2k-2}\}$ is a minimum generating set for a cyclic $m$-cycle system of
$\langle S\rangle_{4km}$.
\end{proof}

What remains is to find cyclic $m$-cycle systems of $K_{mt}-I$ for the appropriate odd values of $t$, which we do in the following subsections.

\subsection{The case when $m \equiv 2\ (\mod 8)$ and $t \equiv 1\ (\mod 4)$.} \hspace{1in}

\vspace*{12pt}
In this subsection, we find a cyclic $m$-cycle system of $K_{mt}-I$ when $m \equiv 2\ (\mod 8)$ and $t \equiv 1\ (\mod 4)$.  We begin with two special cases, namely when $m = 10$ or $t = 5$.

\begin{lemma} \label{m=10}
For each positive integer $t\equiv 1\ (\mod 4)$ with $t > 1$, there exists a cyclic $10$-cycle system
of $K_{10t} - I$.
\end{lemma}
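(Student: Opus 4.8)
The plan is to set $t = 4s+1$ with $s \ge 1$ (which is exactly what $t \equiv 1 \pmod 4$, $t > 1$ says), recall that $K_{10t}-I = \langle\{1,2,\dots,5t-1\}\rangle_{10t}$, and build a minimum generating set of the shape already used in this section: a short list of cycles lying in $\rho$-orbits of length $< 10t$, together with one $2s \times 10$ array of difference $10$-tuples. Since $|\{1,\dots,5t-1\}| = 5t-1 = 20s+4$ is not divisible by $10$, the full-orbit (difference $10$-tuple) cycles cannot carry every edge length by themselves; four lengths is the minimum that must be assigned to short-orbit cycles, leaving $20s$ lengths to be packed into the $2s$ rows of the array.

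First I would handle the lengths $t$ and $3t$ with $C_1 = (0,t,2t,\dots,9t)$ and with the $10$-cycle $C_3$ on $0,3t,6t,9t,\dots$ (steps of $3t$, reduced modulo $10t$). Since $\gcd(3,10)=1$, both are genuine $10$-cycles, each sits in a $\rho$-orbit of length $t$, and these two orbits exhaust the edges of lengths $t$ and $3t$. Next I would remove one further pair of lengths, say $\{t-1,t+1\}$, by means of the length-$2$ path $P\colon 0,\,t-1,\,2t$ and the cycle
\[
C^\ast = P \cup \rho^{2t}(P) \cup \rho^{4t}(P) \cup \rho^{6t}(P) \cup \rho^{8t}(P);
\]
since $(t-1)+(t+1)=2t$, the five translates of $P$ close up to a $10$-cycle on the ten distinct vertices $0,2t,4t,6t,8t,t-1,3t-1,5t-1,7t-1,9t-1$, and $C^\ast$ lies in a $\rho$-orbit of length $2t$ with $\ell(C^\ast)=\{t-1,t+1\}$. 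After deleting $\{t-1,t,t+1,3t\}$, the remaining set $S = \{1,\dots,5t-1\}\setminus\{t-1,t,t+1,3t\}$ has $20s$ elements (still including $2t$ and $4t$), and the task is reduced to filling a $2s\times 10$ array $A = [a_{i,j}]$ with the elements of $S$ so that each row is strictly increasing and satisfies the closure relations used for the array in the preceding subsection (the $m\equiv 2\pmod 4$ conditions $a_{i,j}+a_{i,j+3}=a_{i,j+1}+a_{i,j+2}$ for $j\equiv 3\pmod 4$ with $j\le 7$, and $a_{i,1}+a_{i,2}+a_{i,7}+a_{i,9}=a_{i,8}+a_{i,10}$); then the prescribed alternating reordering of row $i$ is a difference $10$-tuple corresponding to a full-orbit $10$-cycle $C_i'$ with $\ell(C_i')=\{a_{i,1},\dots,a_{i,10}\}$, and $\{C_1,C_3,C^\ast,C_1',\dots,C_{2s}'\}$ is the desired minimum generating set.

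The reason $m=10$ must be singled out is that $K_{10}-I$ admits no cyclic hamiltonian (equivalently, $10$-cycle) system — being of the form $2p^\alpha$ with $p^\alpha = 5$ — so the ``$\langle\{t,2t,3t,4t\}\rangle_{10t}$ is $t$ disjoint copies of $K_{10}-I$'' reduction that worked when $m\equiv 4\pmod 8$ is useless here: the multiples of $t$ cannot be split off, and in particular $2t$ and $4t$ must be distributed into rows of $A$ together with lengths not divisible by $2t$. The main obstacle is therefore the explicit construction of the array $A$ for general $s$ out of the mildly irregular set $S$ — and, I expect, a direct treatment of the small case $s=1$ (that is, $K_{50}-I$). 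Once $A$ is produced, the rest is routine verification: that $\{t\}$, $\{3t\}$, $\{t-1,t+1\}$ and the row-sets of $A$ partition $\{1,\dots,5t-1\}$; that $C_1$, $C_3$, and $C^\ast$ are $10$-cycles lying in $\rho$-orbits of the stated lengths with pairwise edge-disjoint translates (all immediate from Lemma~\ref{distinct_length} and the vertex computations above); and that each reordered row of $A$ meets conditions (i)--(iv) for a difference $m$-tuple.
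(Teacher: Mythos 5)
Your framework is sound and is essentially the paper's: since $|S'|=20s+4\equiv 4\pmod{10}$, you peel off four lengths with short-orbit cycles and pack the remaining $20s$ lengths into a $2s\times 10$ array of difference $10$-tuples. Your short-orbit pieces all check out: $C_1$ and $C_3$ do exhaust the edges of lengths $t$ and $3t$ in orbits of length $t$, and $C^\ast$ (orbit length $2t$, five edges of each of the lengths $t-1$ and $t+1$ per cycle) correctly decomposes $\langle\{t-1,t+1\}\rangle_{10t}$. But the proof has a genuine gap exactly where you flag it: the array $A$ is never constructed, and that array \emph{is} the content of the lemma. Saying that the remaining $20s$ lengths ``are to be packed'' into rows satisfying the closure relations reduces the problem to itself; conditions (i)--(iv) are quite rigid (each row's signed sum must vanish modulo $10t$ and all partial sums must be distinct), and exhibiting a partition of an irregular length set into such rows for all $s\ge 1$ is precisely the nontrivial step. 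As written, nothing has been proved.

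The gap is compounded by your choice of which four lengths to remove. Deleting $\{t-1,t,t+1,3t\}=\{4s,4s+1,4s+2,12s+3\}$ leaves a set with two scattered holes, which makes a uniform array pattern awkward. The paper instead uses the two paths $0,\,5t-1,\,2t$ and $0,\,5t-2,\,2t$ (each of orbit length $2t$) to remove $\{3t-2,3t-1,5t-2,5t-1\}=\{12s+1,12s+2,20s+3,20s+4\}$, so the leftover set is the union of two clean intervals $\{1,\dots,12s\}\cup\{12s+3,\dots,20s+2\}$; it then writes down an explicit $2s\times 10$ array (row $i$ of the form $4i-3,4i-2,4i-1,4i$, a pair from $\{8s+1,\dots,12s\}$ differing by $2$, and four consecutive integers from the upper interval) for which the required identity $a_{i,2}+a_{i,4}+a_{i,5}+a_{i,8}+a_{i,9}=a_{i,1}+a_{i,3}+a_{i,6}+a_{i,7}+a_{i,10}$ is immediate. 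If you want to salvage your variant, you must either produce an analogous explicit array for your set $S$ (and verify the partial-sum condition, which is not automatic when $2t$ and $4t$ sit in a row) or switch to a removal that leaves intervals. Also note your cited closure relations are the ones from the $t\equiv 0\pmod 4$ construction, which go with a different tuple ordering than the one used in this subsection; you would need to fix one ordering and state the matching linear conditions consistently.
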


\begin{proof}
Let $t\equiv 1\ (\mod 4)$ with $t > 1$, say $t = 4s + 1$ where $s \ge 1$.  Then $K_{10t} -I = \langle S' \rangle_{10t}$ where $S' = \{1, 2,
\ldots, 20s+4\}$. Consider the paths $P_{1}
: 0, 5t-1, 2t$ and $P_2: 0, 5t-2, 2t$.  Then, $\ell(P_1) = \{3t-1,
5t-1\}$ and $\ell(P_2) = \{3t-2, 5t-2\}$.  For $i\in \{1, 2\}$, let
$C_i = P_{i} \cup \rho^{2t}(P_{i}) \cup \rho^{4t}(P_{i}) \cup \cdots
\cup \rho^{8t}(P_{i})$.  Then clearly each $C_i$ is an $10$-cycle
and $X = \{C_1, C_2\}$ is a minimum generating set for $ \langle \{3t-2, 3t-1, 5t-2, 5t-1\}\rangle_{10t}$.  Since $3t-3 = 12s$ and $5t-2 = 20s+3$, it remains to
find a cyclic 10-cycle system of $\langle S \rangle_{10t}$ where $S = \{1, 2,
\ldots, 12s, 12s+3, 12s+4, \ldots, 20s+2\}$.  Let $A = [a_{i,j}]$ be
the $2s \times 10$ array
\begin{eqnarray*} {\footnotesize
\left[ \begin{array}{lllllllllllllll}
1 & 2 & 3 & 4 & 8s+1 & 8s+3 & 12s+3 & 12s+4 & 12s+5 & 12s+6 \\
5 & 6 & 7 & 8 & 8s+2 & 8s+4 & 12s+7 & 12s+8 & 12s+9 & 12s+10 \\
\vdots & \vdots & \vdots & \vdots & \vdots & \vdots & \vdots & \vdots\\
8s-3 & 8s-2 & 8s-1 & 8s &12s-2 & 12s & 20s-1 & 20s & 20s+1 & 20s + 2\\
                     \end{array}
              \right]}
\end{eqnarray*}
Clearly, for each $i$ with $1\le i \le 2s$,
\[ a_{i,2} + \sum_{j \equiv 0,1\ (\mod 4)} a_{i,j} = a_{i,1} + \sum_{j
\equiv 2,3\
(\mod 4)} a_{i,j}\ (\mbox{where } 3 \le j \le 10)\]
and \[a_{i,1} < a_{i,2} < \ldots < a_{i, 10}.\] Thus the $10$-tuple
$$(a_{i,1}, -a_{i,2}, a_{i,3}, -a_{i,5}, a_{i,7},
-a_{i,9}, -a_{i,8},  a_{i,6}, -a_{i,4}, a_{i,10})$$ is a difference
$10$-tuple and corresponds to an $10$-cycle $C'_i$ with $\ell(C'_i)
= \{a_{i,1}, a_{i,2}, \ldots,
a_{i,10}\}$.  Hence, $X' = \{C'_1, C'_2, \ldots,
C'_{2s}\}$ is a minimum generating set for a cyclic $10$-cycle system
of $ \langle S' \setminus B \rangle_{10t}$.
\end{proof}

We now consider the case when $t = 5$.

\begin{lemma} \label{t=5}
For each positive integer $m \equiv
2\ (\mod 8)$, there exists a cyclic $m$-cycle system
of $K_{5m} - I$.
\end{lemma}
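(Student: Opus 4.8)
The plan is to regard $K_{5m}-I$ as the circulant graph $\langle\{1,2,\dots,5m/2-1\}\rangle_{5m}$ (recall that $m$, hence $5m/2$, is an integer), to write $m=8r+2$, and to split the length set $L=\{1,2,\dots,5m/2-1\}$, which has $20r+4=2m+(m/2-1)$ elements, into a set $B$ of $m/2-1=4r$ lengths to be realized by ``short'' cycles and the complementary set $S=L\setminus B$ of $2m$ lengths to be realized by exactly two difference $m$-tuples. The case $m=10$ (that is, $r=1$) is already covered by Lemma~\ref{m=10} with $t=5$, so I may assume $m\ge 18$. I deliberately avoid the otherwise tempting route of writing $\langle\{5,10,\dots,5m/2-5\}\rangle_{5m}$ as five vertex-disjoint copies of $K_m-I$: a cyclic hamiltonian cycle system of $K_m-I$ need not exist (by \cite{GM} it fails exactly when $m=2p^{\alpha}$, and such values of $m$ are not excluded from the statement), so a uniform construction is needed.

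For $B$, I would take $B$ to be a union of $2r$ pairs of the form $\{b,b+10\}$ with each $b$ \emph{not} divisible by $5$ (so in particular every multiple of $5$ stays in $S$); letting $b+10$ run through the largest admissible values $5m/2-1,5m/2-2,\dots$ is the natural choice, but all that matters is obtaining $4r$ distinct valid lengths. For each pair $\{b,b+10\}$ take the length-$2$ path $P:0,\ b+10,\ 10$, so that $\ell(P)=\{b+10,b\}$, and set $C=P\cup\rho^{10}(P)\cup\rho^{20}(P)\cup\cdots\cup\rho^{5m-10}(P)$. Since $m$ is even, $\gcd(10,5m)=10$, so the $m/2$ translates of $P$ by powers of $\rho^{10}$ close up into a single $m$-cycle; because $b\not\equiv 0,5\ (\mod 10)$ the interior vertices of distinct copies do not coincide, so $C$ is a genuine $m$-cycle with $\ell(C)=\{b,b+10\}$ whose $\Z_{5m}$-orbit has length $10$ and accounts for all $5m$ edges of each of the lengths $b$ and $b+10$. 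Distinct pairs use disjoint length sets, so no two of these $2r$ cycles, or their translates, share an edge, and together they take care of $B$.

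The heart of the argument is to arrange the $2m$ elements of $S$ into a $2\times m$ array $A=[a_{i,j}]$ with strictly increasing rows $a_{i,1}<a_{i,2}<\cdots<a_{i,m}$ satisfying a balanced-sum identity of the type used in Lemma~\ref{m=10}, namely
\[ a_{i,2}+\sum_{j\equiv 0,1\ (\mod 4)} a_{i,j} \;=\; a_{i,1}+\sum_{j\equiv 2,3\ (\mod 4)} a_{i,j} \qquad (3\le j\le m,\ i=1,2), \]
so that the $m$-tuple $(a_{i,1},-a_{i,2},a_{i,3},-a_{i,5},a_{i,7},\dots,-a_{i,m-1},-a_{i,m-2},a_{i,m-4},\dots,-a_{i,4},a_{i,m})$ is a difference $m$-tuple: conditions (i)--(iii) then hold immediately, and condition (iv) is checked, as in Lemma~\ref{m=10}, by showing separately that the partial sums taken at odd steps lie in one short interval with no repetition and those taken at even steps lie in another. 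The two $m$-cycles obtained this way, together with the $2r$ short cycles above, then form a minimum generating set for a cyclic $m$-cycle system of $K_{5m}-I$, since the sets $\ell(C)$ partition $L$.

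I expect the one genuine obstacle to be the explicit construction of this array. The set $S$ is $\{1,2,\dots,5m/2-1\}$ with the $4r$ lengths of $B$ removed, so it is a disjoint union of runs of consecutive integers, and one must distribute these runs between the two rows so that both the monotonicity and the alternating-sum identity hold simultaneously; by analogy with Lemmas~\ref{0mod8_2m} and \ref{4m}, I anticipate this will require a case split according to the parity of $r$ together with a handful of small-$m$ base cases, plus some care in choosing $B$ so that its lower members $b$ dodge the multiples of $5$.
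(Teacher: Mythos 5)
Your overall architecture is exactly the paper's: remove a set $B$ of $4r=(m-2)/2$ lengths handled by $m$-cycles generated from length-$2$ paths lying in orbits of length $10$, and cover the remaining $2m$ lengths by two difference $m$-tuples read off a $2\times m$ array satisfying the balanced-sum identity. The short-cycle mechanism you describe is sound (with $P:0,\ b+10,\ 10$ one needs $10\nmid b$, as you essentially note). The gap is that what you label as ``the one genuine obstacle'' is in fact the entire content of the proof, and the choice of $B$ you sketch does not survive inspection. Since a pair $\{b,b+10\}$ has both members in the same residue class modulo $10$ and $b\equiv 0\pmod{10}$ is forbidden, no multiple of $10$ can ever lie in $B$; hence all $2r$ multiples of $10$ remain in $S=L\setminus B$, and unless the pairs are arranged so that each such multiple sits inside a run of consecutive $S$-elements of \emph{even} length, $S$ cannot be partitioned into $\{1,\dots,12\}$ together with adjacent pairs $\{b_i,b_i+1\}$, which is what the array construction requires. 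Moreover your ``natural choice'' (upper elements $b+10$ running down from $5m/2-1$) collides with itself once $r\ge 4$: the block of roughly $2.5r$ top values occupied by the upper elements then overlaps the block of lower elements ten units below, so the pairs are no longer disjoint.

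The paper's proof is precisely the resolution of this difficulty: it pairs lengths at distance $5\cdot 2^{a}$ rather than $10$, for an integer $a$ with $1+\log_2(q+2)\le a\le 1+\log_2(5q+2)$, where $2r=6q+4+b$ with $b\in\{0,2,4\}$. The translation step is still $10$ (since $\gcd(5\cdot 2^{a},5m)=10$), but now $B$ is a union of a few short blocks of consecutive integers near the top of $L$ together with their shifts by $5\cdot 2^{a}$, and two explicit inequalities (that $20r+4-5\cdot 2^{a}\le 10(2r-q-1)+6-b$, and that the smallest element of $B$ exceeds $12$) guarantee both that $B$ consists of $4r$ distinct lengths and that $S'\setminus B$ is a disjoint union of even-cardinality runs of consecutive integers containing $\{1,\dots,12\}$. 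Only then can the explicit $2\times m$ array (first six columns $1,2,3,4,9,11$ over $5,6,7,8,10,12$, followed by the adjacent pairs $b_i,\ b_i+1$) be written down and verified. Until you produce an explicit $B$ with these properties and the corresponding array, you have a correct outline of what must be done rather than a proof.
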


\begin{proof}
Let $m$ be a positive integer such that $m \equiv 2\ (\mod
8)$, say $m = 8r+2$ for some positive
integer $r$. By Lemma \ref{m=10}, we may assume $r \ge 2$. Then $K_{5m} -I = \langle S' \rangle_{5m}$ where $S' = \{1, 2,
\ldots, 20r+4\}$.

 Let $2r = 6q + 4 + b$ for
integers $q \ge 0$ and $b \in \{0,2,4\}$. Let $a$ be a positive
integer such that $1 + \log_2(q+2) \le a \le 1 + \log_2(5q+2),$ and
note that $a$ exists since if $q=0$ then $\log_2(q+2)$ is an integer, while if $q\ge 1$ then $2(q+2)=2q+4\le 4q+2<5q+2$. For nonnegative integers $i$ and $j$,
define $d_{i,j} = 10(2r-i) + j$. Consider the path $P_{i,j} : 0,
d_{i,j}, 5 \cdot 2^a$ and observe that $\ell(P_{i,j}) = \{10(2r-i) +
j, 10(2r-i) + j- 5 \cdot 2^a\}$.  If $0< j < 10$, then $C_{i,j} =
P_{i,j} \cup \rho^{10}(P_{i,j}) \cup \rho^{20}(P_{i,j}) \cup \cdots
\cup \rho^{5m-10}(P_{i,j})$  is an $m$-cycle since $m \equiv 2\
(\mod 8)$ gives $\gcd(5 \cdot 2^a, 5m) = 10$.  Thus, if $0 < j < 10$,
$\ell(C_{i,j}) = \{10(2r-i) + j, 10(2r-i) + j- 5 \cdot 2^a\}$. Let
$$X = \{C_{0,j} \mid  1 \le j \le 4\} \cup \{C_{i,j} \mid 1 \le i \le q
\mbox{ and } 1 \le j \le 6\} \cup
\{C_{q+1,j} \mid  6-b+1 \le j \le 6\}$$ and let
\begin{eqnarray*}
B &=& \{20r+j, 20r+j - 5 \cdot 2^a \mid  1 \le j \le 4\} \\ && \cup
\ \{10(2r-i)+j, 10(2r-i)+j - 5 \cdot 2^a \mid 1 \le i \le q \mbox{
and } 1 \le j \le 6\} \\
&&\cup \ \{10(2r-q-1)+j, 10(2r-q-1)+j-5 \cdot 2^a \mid  6-b+1 \le j
\le 6\},
\end{eqnarray*}
where if $q=0$ or $b=0$, we take the corresponding sets to be empty
as necessary. Now $B$ will consist of $4r$ distinct lengths and $X$
will be a minimum generating set for $\langle B \rangle_{5m}$ if $20r+4 - 5 \cdot
2^a \le 10(2r-q-1) + 6-b$. Note that $1 + \log_2(q+2) \le a \le 1 +
\log_2(5q+2)$ gives $q+2 \le 2^{a-1} \le 5q+2$.  So,
$$20r+4 - [10(2r-q-1) + 6-b] = 10q+8+b \le 10q+12$$ and $$(10q+12)/10
< q+2 \le 2^{a-1}.$$ Thus $20r+4 - 5 \cdot 2^a \le 10(2r-q-1) +
6-b$ so that $B$ consists of $4r$ distinct lengths, and $X$ is a minimum
generating set for $\langle B \rangle_{5m}$.

It remains to find a cyclic $m$-cycle system of $\langle S'
\setminus B \rangle_{5m}$. The smallest length in $B$ is $10(2r-q-1) +
6-b+1-5\cdot 2^a$, and we wish to show $10(2r-q-1) + 6-b - 5 \cdot
2^a \ge 12$.  So, $$10(2r-q-1) + 6-b - 12 = 20r - 10q - 16 -b \ge
20r - 10q - 20$$ and $(20r-10q-20)/10 \ge 2r-q-2$.  Now $$2r-q-2 =
5q+ 2 + b \ge 5q+2 \ge 2^{a-1}.$$ Hence, $10(2r-q-1) + 6-b - 5 \cdot
2^a \ge 12$. Since $|B| = 4r$, we have $|S' \setminus B| = 20r+4 -4r
= 2(8r+2)$. Now
\begin{eqnarray*}
S' \setminus B &=& \{1, 2, \ldots, 10(2r-q-1) + 6-b - 5 \cdot
2^a\}  \cup \  \{10(2r-i) -5\cdot 2^a - 3,  \\
&& 10(2r-i) -5\cdot 2^a - 2, 10(2r-i) -5\cdot 2^a - 1,
10(2r-i) -5\cdot 2^a \mid 0 \le i \le q\}\\
&& \cup \ \{10(2r) + 5 - 5 \cdot 2^a, \ldots, 10(2r-q-1) +
6-b\} \\
&& \cup \ \{10(2r-i) - 3, 10(2r-i) - 2, 10(2r-i) - 1,
10(2r-i) \mid 0 \le i \le q\}.
\end{eqnarray*}
Note that each the sets $\{1, 2, \ldots, 10(2r-q-1) + 6-b - 5 \cdot
2^a\}, \{10(2r-i) -5\cdot 2^a - 3, 10(2r-i) -5\cdot 2^a - 2,
10(2r-i) -5\cdot 2^a - 1, 10(2r-i) -5\cdot 2^a \mid 0 \le i \le q\},
\{10(2r) + 5 - 5 \cdot 2^a, \ldots, 10(2r-q-1) + 6-b\}$, and
$\{10(2r-i) - 3, 10(2r-i) - 2, 10(2r-i) - 1, 10(2r-i) \mid 0 \le i
\le q\}$ has even cardinality and consists of consecutive integers.
Therefore, we may partition $S' \setminus B$ into sets $T, S_1, S_2,
\ldots, S_{8r-4}$ where $T = \{1, 2, \ldots, 12\}$ and for $i = 1,
2, \ldots, 8r-4$, let $S_i = \{b_{i}, b_{i}+1\}$ with $b_{1} < b_{2}
< \cdots < b_{8r-4}.$

Let $A =
[a_{i,j}]$ be the $2 \times m$ array
\begin{eqnarray*} {\footnotesize
\left[ \begin{array}{llllllllllllll}
1 & 2 & 3 & 4 & 9 & 11 & b_1 & b_1+1 & b_2 & b_2+1 & \cdots & b_{4r-2} &
b_{4r-2} + 1\\
5 & 6 & 7 & 8 & 10 & 12 & b_{4r-1} & b_{4r-1}+1 & b_{4r} & b_{4r}+1 & \cdots
& b_{8r-4} & b_{8r-4}+1 \\
                     \end{array}
              \right]}
\end{eqnarray*}
It is straightforward to verify that, for $1 \le i \le 2$,
\[ a_{i,2} + \sum_{j \equiv 0,1\ (\mod 4)} a_{i,j} = a_{i,1} + \sum_{j
\equiv 2,3\ (\mod 4)} a_{i,j}\ (\mbox{where } 3 \le j \le m)\]
and \[a_{i,1} < a_{i,2} < \ldots < a_{i, m}.\] Hence, for $1 \le i \le 2$,
 the $m$-tuple
$$(a_{i,1}, -a_{i,2}, a_{i,3}, -a_{i,5}, a_{i,7}, \ldots, a_{i,m-3},
-a_{i,m-1}, -a_{i,m-2}, a_{i,m-4}, -a_{i,m-6}, \ldots, a_{i,6},
-a_{i,4}, a_{i,m})$$ is a difference $m$-tuple and corresponds to an
$m$-cycle $C_i$ with $\ell(C_i) = \{a_{i,1}, a_{i,2}, \ldots,
a_{i,m}\}$.  Hence, $X' = \{C_1, C_2\}$ is a minimum generating set for a
cyclic $m$-cycle system of
$ \langle S' \setminus B \rangle_{5m}$.
\end{proof}

We are now ready to prove the main result of this subsection, namely, that $K_{mt} - I$ has a  cyclic $m$-cycle system for every $t\equiv 1\ (\mod 4)$ and $m \equiv
2\ (\mod 8)$ with $t > 1$ if $m = 2p^{\alpha}$ for some prime $p$
and integer $\alpha \ge 1$.

\begin{lemma} \label{t=1(mod4)}
For each positive integer $t\equiv 1\ (\mod 4)$ and each $m \equiv
2\ (\mod 8)$ with $t > 1$ if $m = 2p^{\alpha}$ for some prime $p$
and integer $\alpha \ge 1$, there exists a cyclic $m$-cycle system
of $K_{mt} - I$.
\end{lemma}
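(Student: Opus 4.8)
The plan is to push everything down, as fast as possible, to the base cases already available and then attack the generic situation with a single array‑plus‑short‑cycle construction. Write $m = 8r+2$ and $t = 4s+1$ throughout, so that $mt/2-1 = 2ms+4r$ and $m/2-1 = 4r$. If $t = 1$ the hypothesis forces $m \neq 2p^{\alpha}$, and then $K_{mt}-I = K_m-I$ has a cyclic hamiltonian (hence $m$-)cycle system by \cite{GM}; if $m = 10$ the statement is Lemma~\ref{m=10}; and if $t = 5$ it is Lemma~\ref{t=5}. So I may assume $r \geq 2$ and $s \geq 2$, i.e.\ $m \geq 18$ and $t \geq 9$. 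In this range Lemma~\ref{t-conditions} imposes no obstruction beyond $t \equiv 1\ (\mod 4)$, so the construction need not care whether or not $m$ is twice a prime power.

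Now $K_{mt}-I = \langle\{1,2,\ldots,mt/2-1\}\rangle_{mt}$. When $m \neq 2p^{\alpha}$ the cleanest first move is to peel off the lengths that are multiples of $t$: the graph $\langle\{t,2t,\ldots,(m/2-1)t\}\rangle_{mt}$ is $t$ vertex‑disjoint copies of $K_m-I$, each of which (since $m \equiv 2\ (\mod 8)$ and $m \neq 2p^{\alpha}$) has a cyclic hamiltonian cycle system by \cite{GM}; this uses up exactly $m/2-1 = 4r$ lengths and leaves $S = \{1,\ldots,mt/2-1\}\setminus\{t,2t,\ldots,(m/2-1)t\}$, of size $(m/2)(t-1) = 2ms$. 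For $m = 2p^{\alpha}$ (or, for a uniform argument, in every case) I would instead first dispose of some suitable set $B$ of $4r$ lengths by $2r$ short‑orbit $m$-cycles: pair the members of $B$ as $\{d,|2t-d|\}$, and for each pair take the length‑two path $P\colon 0,d,2t$ together with $P,\rho^{2t}(P),\rho^{4t}(P),\ldots,\rho^{(m/2-1)2t}(P)$; since $\gcd(2,m)=2$ and $d$ is chosen to be a non‑multiple of $2t$, these translates chain into a single $m$-cycle with length set $\{d,|2t-d|\}$, exactly as in Lemmas~\ref{0mod8_2m}, \ref{4m}, and \ref{m=10}. Either way one is reduced to a length set of size $2ms$.

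The heart of the proof is then to split this length set into $2s$ difference $m$-tuples, and, following the earlier lemmas, I would do so by displaying a $2s \times m$ integer array $A = [a_{i,j}]$ whose entries are exactly the remaining lengths, with each row strictly increasing, $a_{i,1} < a_{i,2} < \cdots < a_{i,m}$, and satisfying the balance identity $a_{i,2} + \sum_{3\le j\le m,\ j\equiv 0,1\ (\mod 4)} a_{i,j} = a_{i,1} + \sum_{3\le j\le m,\ j\equiv 2,3\ (\mod 4)} a_{i,j}$ appropriate to $m \equiv 2\ (\mod 4)$. Then, just as in Lemmas~\ref{k-odd} and \ref{m=10}, the $m$-tuple $(a_{i,1},-a_{i,2},a_{i,3},-a_{i,5},\ldots,a_{i,m-3},-a_{i,m-1},-a_{i,m-2},a_{i,m-4},\ldots,a_{i,6},-a_{i,4},a_{i,m})$ satisfies conditions (i)--(iv): the absolute values are distinct because the row is, the sum vanishes by the balance identity, and the partial sums are distinct by a monotonicity check on the alternating running totals. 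The corresponding cycles $C_1,\ldots,C_{2s}$, together with the $2r$ short cycles (or the $K_m-I$ decompositions), then form a minimum generating set for a cyclic $m$-cycle system of $K_{mt}-I$.

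The step I expect to be the real work is producing the array: when $m \neq 2p^{\alpha}$ its entries are the consecutive integers $1,\ldots,mt/2-1$ with the arithmetic progression $t,2t,\ldots,(m/2-1)t$ deleted, and when short cycles are used there are $4r$ further scattered holes, so one must fit these "punctured" intervals into a rectangular array keeping every row increasing and meeting the balance condition. As in the analogous arrays of the preceding sections, this will almost certainly require a case split on the residues of $r$ and of $s$ and an explicit, slightly delicate description of the first few and last few columns (where the deleted lengths and the correction term $a_{i,2}-a_{i,1}$ concentrate). A secondary bookkeeping point is to choose $B$ and its pairing into $\{d,|2t-d|\}$ pairs so that $B$ together with the array entries is precisely $\{1,\ldots,mt/2-1\}$ with no repetition; once the array is pinned down, this is a matter of matching up the $4r$ leftover values in pairs summing to $2t$ (or differing suitably), which is routine but must be arranged compatibly with the array's constraints.
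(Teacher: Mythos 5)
Your overall strategy is the same as the paper's: dispose of $t=1$, $m=10$, $t=5$ via \cite{GM} and Lemmas~\ref{m=10} and~\ref{t=5}; remove a set $B$ of $4r$ lengths using $2r$ short-orbit $m$-cycles, each built from a length-two path whose translates chain into an $m$-cycle with two lengths; and partition the remaining $2ms$ lengths into $2s$ difference $m$-tuples read off a $2s\times m$ array satisfying the balance identity for $m\equiv 2\pmod 4$. (Your side remark that for $m\neq 2p^{\alpha}$ one could instead peel off the lengths $t,2t,\ldots,(m/2-1)t$ as $t$ disjoint copies of $K_m-I$ is sound but does not cover $m=2p^{\alpha}$, so the $B$-plus-array construction cannot be avoided.)

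The gap is that you have not carried out the construction, and the construction is essentially the whole proof. You leave unspecified: (a) which $4r$ lengths go into $B$ and how they are paired — the paper takes $d_{i,j}=4(r-2i)t+j$ with paths $0,d_{i,j},4t$, so the pairs are $\{4(r-2i)t+j,\,4(r-2i-1)t+j\}$ (differing by $4t$, not your $\{d,|2t-d|\}$), with the index ranges governed by writing $\lfloor r/2\rfloor=qs+b$; (b) the verification that $B$ consists of $4r$ \emph{distinct} lengths and that its smallest element exceeds $12s$, which is exactly what forces the hypothesis $s\ge 2$, $r\ge 2$ and the inequalities $4(r-2q-1)t+t-3-4b>12s$ in the paper; (c) the check that $S'\setminus B$ is a disjoint union of blocks of consecutive integers each of \emph{even} cardinality, which is what allows it to be split into $T=\{1,\ldots,12s\}$ and pairs $\{b_i,b_i+1\}$ feeding the array; and (d) the array itself, whose first six columns $(1,2,3,4,8s+1,8s+3;\,\ldots;\,8s-3,\ldots,12s)$ are specifically engineered so that the balance identity and, crucially, condition (iv) (distinct partial sums modulo $mt$) hold — the latter does not follow from row-monotonicity plus the balance identity alone, but from the fact that consecutive entries $b_i,b_i+1$ make the zigzag partial sums nest correctly. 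You explicitly flag the array as "the real work" and defer it; until it is exhibited and these checks are made, the statement is not proved.
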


\noindent
\begin{proof}
Let $m$ and $t$ be positive integers such that $m \equiv 2\ (\mod
8)$ and $t \equiv 1\ (\mod 4)$. Thus $m = 8r+2$ for some positive
integer $r$. Then $K_{mt} -I = \langle S' \rangle_{mt}$ where $S' = \{1, 2,
\ldots, (mt-2)/2\}$. Since $K_m - I$ has a cyclic hamiltonian cycle
system \cite {GM} if and only if $m \not= 2p^{\alpha}$ for some
prime $p$ and integer $\alpha \ge 1$, we may assume that $t > 1$.
Thus, let $t = 4s+1$ for some positive integer $s$.  By Lemmas \ref{m=10} and \ref{t=5}, we may assume that $s \ge 2$ and $r \ge 2$.

The proof
proceeds as follows. We begin by finding a set $B \subseteq S'$ such
that $|B|=4r$ and $\langle B \rangle_{mt}$ has a cyclic $m$-cycle system with
a minimum generating set $X$ consisting of cycles each with two
distinct lengths and orbit $2t$. We then construct an $(|S'
\setminus B| / m) \times m$ array $A = [a_{i,j}]$ with the property
that for each $i$ with $1\le i \le |S' \setminus B|/m$,
\[ a_{i,2} + \sum_{j \equiv 0,1\ (\mod 4)} a_{i,j} = a_{i,1} + \sum_{j
\equiv 2,3\
(\mod 4)} a_{i,j}\ (\mbox{where } 3 \le j \le m)\]
and \[a_{i,1} < a_{i,2} < \ldots < a_{i, m}.\] Thus for
each $i = 1, 2, \ldots, |S' \setminus B|/m$, the $m$-tuple
$$(a_{i,1}, -a_{i,2}, a_{i,3}, -a_{i,5}, a_{i,7}, \ldots, a_{i,m-3},
-a_{i,m-1}, -a_{i,m-2}, a_{i,m-4}, -a_{i,m-6}, \ldots, a_{i,6},
-a_{i,4}, a_{i,m})$$ is a difference $m$-tuple and corresponds to an
$m$-cycle $C_i$ with $\ell(C_i) = \{a_{i,1}, a_{i,2}, \ldots,
a_{i,m}\}$.  Hence, $X' = \{C_1, C_2, \ldots,
C_{|S' \setminus B|/m}\}$ will be a minimum generating set for a cyclic
$m$-cycle system of $\langle S' \setminus B \rangle_{mt}.$

Let $w=\lfloor r/2\rfloor$, and let $\delta_r=2(r/2-w)$, so that $\delta_r=1$ if $r$ is odd and $\delta_r=0$ if $r$ is even. Write $w
= qs+b$ where $q$ and $b$ are non-negative integers with $0 \le b <
s$ (note that it may be the case that $q=0$). For integers $i$ and
$j$, define $d_{i,j} = 4(r-2i)t + j$. Consider the path $P_{i,j} :
0, d_{i,j}, 4t$ and observe that $\ell(P_{i,j}) = \{4(r-2i)t+j,
4(r-2i-1)t+j\}$.  If $0< j <t$, then $C_{i,j} = P_{i,j} \cup
\rho^{2t}(P_{i,j}) \cup \rho^{4t}(P_{i,j}) \cup \cdots \cup
\rho^{(m-2)t}(P_{i,j})$  is an $m$-cycle since $m \equiv 2\ (\mod
8)$ gives $\gcd(4t, mt) = 2t$.  Thus, if
$0 < j < t$, $\ell(C_{i,j}) = \{4(r-2i)t+j, 4(r-2i-1)t+j\}$. Let
$$X = \{C_{i,j} \mid 0 \le i \le q-1 \mbox{ and } 1 \le j \le t-1\} \cup
\{C_{q,j} \mid  t-4b-2\delta_r \le j \le t-1\}$$ and let
\begin{eqnarray*}
B &=& \{4(r-2i)t+j, 4(r-2i-1)t+j \mid 0 \le i \le q-1 \mbox{ and } 1
\le j \le
t-1\} \\
&&\cup \{4(r-2q)t+j, 4(r-2q-1)t+j \mid  t-4b \le j \le t-1\},
\end{eqnarray*}
where we take the appropriate sets to be empty if $q=0$ or $b = 0$.
Observe that $X$ is a minimum generating set for $\langle B \rangle_{mt}$, and
consider the set $S' \setminus B$.  Now $|X| = 4qs + 4b$ so that
$|B| = 2(4qs + 4b) = 4r$.  Hence $|S' \setminus B| = 4(r+1)t-1 -4r =
2s(8r+2)$ and
\begin{eqnarray*}
S' \setminus B &=& \{1, 2, \ldots, 4(r-2q-1)t + t-1 -2\delta_r-4b\} \\
&& \cup \  \{4(r-2q-1)t + t, 4(r-2q-1)t + t + 1, \ldots, 4(r-2q)t +
t-1 -2\delta_r-4b\}\\
&& \cup \  \{ 4kt+t, 4kt+t+1, \ldots, 4(k+1)t \mid r-2q \le k \le
r-1\}.
\end{eqnarray*}
Note that $S'\setminus B$ has been written as the disjoint union of sets, each of which
has even cardinality and consists of consecutive integers.

The smallest length in $B$ is $4(r-2q-1)t + t-4b-2\delta_r$, and we wish to show this length is
at least $12s+1$.  Now $r \ge 2w = 2(qs+b) > 2q+1$ since $s \ge 2$.
Next since $0 \le b < s$ and $t = 4s+1$, we have
$t-1-4b = 4s - 4b \ge 4$. Therefore, $4(r-2q-1)t \ge 4t
> 16s$, and thus $4(r-2q-1)t + t-3-4b >16s+2> 12s$.
Since the smallest length is $S' \setminus B$ is at least $12s+1$
and since $S' \setminus B$ consists of sets of consecutive integers
of even cardinality, we may partition $S' \setminus B$ into sets $T,
S_1, \ldots, S_{8rs-4s}$ where $T = \{1, 2, \ldots, 12s\}$, and for
$i = 1, 2, \ldots, 8rs-4s$, $S_i = \{b_{i}, b_{i}+1,\}$ with $b_{1}
< b_{2} < \cdots < b_{8rs-4s}.$ Let $A = [a_{i,j}]$ be the $2s
\times m$ array
\begin{eqnarray*} {\footnotesize
\left[ \begin{array}{lllllllllllllll}
1 & 2 & 3 & 4 & 8s+1 & 8s+3 & b_1 & b_1+1 &  \\
5 & 6 & 7 & 8 & 8s+2 & 8s+4 & b_{4r-1} & b_{4r-1}+1 &  \\
\vdots & \vdots & \vdots & \vdots & \vdots & \vdots & \vdots & \vdots &
\cdots \\
8s-3 & 8s-2 & 8s-1 & 8s &12s-2 & 12s & b_{8rs-4s-4r+3} & b_{8rs-4s-4r+3}+1 &
\\
                     \end{array}
              \right.}
\end{eqnarray*}
\begin{eqnarray*} {\footnotesize
\left. \begin{array}{llllllllllllllllll}
b_2 & b_2+1 & \cdots & b_{4r-2} & b_{4r-2} + 1\\
b_{4r} & b_{4r}+1 &  \cdots & b_{8r-4} & b_{8r-4} + 1 \\
\vdots & \vdots & \cdots & \vdots &\vdots  \\
b_{8rs-4s-4r+4} & b_{8rs-4s-4r+4}+1 & \cdots & b_{8rs-4s} & b_{8rs-4s}+1 \\
                     \end{array}
              \right].}
\end{eqnarray*}
Clearly, for each $i$ with $1\le i \le 2s$,
\[ a_{i,2} + \sum_{j \equiv 0,1\ (\mod 4)} a_{i,j} = a_{i,1} + \sum_{j
\equiv 2,3\
(\mod 4)} a_{i,j}\ (\mbox{where } 3 \le j \le m)\]
and \[a_{i,1} < a_{i,2} < \ldots < a_{i, m}.\] Thus the $m$-tuple
$$(a_{i,1}, -a_{i,2}, a_{i,3}, -a_{i,5}, a_{i,7}, \ldots, a_{i,m-3},
-a_{i,m-1}, -a_{i,m-2}, a_{i,m-4}, -a_{i,m-6}, \ldots, a_{i,6},
-a_{i,4}, a_{i,m})$$ is a difference $m$-tuple and corresponds to an
$m$-cycle $C_i$ with $\ell(C_i) = \{a_{i,1}, a_{i,2}, \ldots,
a_{i,m}\}$.  Hence, $X' = \{C_1, C_2, \ldots,
C_{2s}\}$ is a minimum generating set for a cyclic $m$-cycle system of
$ \langle S' \setminus B \rangle_{mt}$.
\end{proof}

\subsection{The Case when $m \equiv 6 \ (\mod 8)$ and $t \equiv 3\ (\mod 4)$} \hspace{1in}

\vspace{12pt}

In this subsection, we find a cyclic $m$-cycle system of $K_{mt}-I$ when $m \equiv 6 \ (\mod 8)$ and $t \equiv 3\ (\mod 4)$.  We begin with three special cases, namely when $m = 6$, $m=14$, or $t = 3$. We first consider the case $m = 6$.

\begin{lemma} \label{m=6}
For all positive integers $t\equiv 3\ (\mod 4)$, there exists a cyclic $6$-cycle system of $K_{6t} - I$.
\end{lemma}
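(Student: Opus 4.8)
The plan is to handle $K_{6t}-I$ for $t\equiv 3\pmod 4$ by the same pattern used throughout the paper: write $t=4s+3$, peel off a few vertex-disjoint copies of a known small system, then partition the remaining lengths into difference $6$-tuples via an explicitly constructed array. Since $K_{18}-I$ (the case $t=3$) will be dealt with in the following lemma, and $K_6-I$ has a cyclic hamiltonian cycle system by \cite{GM}, I may assume $s\ge 1$, i.e.\ $t\ge 7$. Here $K_{6t}-I=\langle S'\rangle_{6t}$ with $S'=\{1,2,\ldots,3t-1\}$. The subgraph $\langle\{t,2t\}\rangle_{6t}$ consists of $3$ vertex-disjoint copies of $K_6-I$ (edges of lengths $t$ and $2t$, with $3t$ being the removed $1$-factor), each of which carries a cyclic hamiltonian $6$-cycle system; so it suffices to build a cyclic $6$-cycle system of $\langle S\rangle_{6t}$ where $S=\{1,2,\ldots,3t-1\}\setminus\{t,2t\}$, a set of size $3t-3=12s+6=6(2s+1)$.

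The core step is to exhibit a $(2s+1)\times 6$ array $A=[a_{i,j}]$ whose entries are exactly the elements of $S$, with each row increasing, $a_{i,1}<a_{i,2}<\cdots<a_{i,6}$, and satisfying a signed-sum identity such as
\[
a_{i,1}+\sum_{j\equiv 0,1\ (\mathrm{mod}\ 4)}a_{i,j}
 \;=\; a_{i,2}+\sum_{j\equiv 2,3\ (\mathrm{mod}\ 4)}a_{i,j}
 \qquad (3\le j\le 6),
\]
exactly as in Lemmas~\ref{m=10} and~\ref{t=5}. Given such an array, the $6$-tuple $(a_{i,1},-a_{i,2},a_{i,3},-a_{i,5},-a_{i,4},a_{i,6})$ is a difference $6$-tuple: the increasing condition gives the partial-sum distinctness (iv), distinctness of $|a_{i,j}|$ gives (ii), and the identity gives (iii). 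The corresponding $6$-cycles $C_i$ have $\ell(C_i)=\{a_{i,1},\ldots,a_{i,6}\}$, so $\{C_1,\ldots,C_{2s+1}\}$ is a minimum generating set for a cyclic $6$-cycle system of $\langle S\rangle_{6t}$, and combined with the three hamiltonian systems on $\langle\{t,2t\}\rangle_{6t}$ this yields the desired system of $K_{6t}-I$.

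The main obstacle is constructing the array explicitly and verifying it really uses each element of $\{1,\ldots,3t-1\}\setminus\{t,2t\}$ exactly once while meeting both the row-monotonicity and the sum condition — in particular the two ``holes'' at $t$ and $2t$ break the clean arithmetic-progression structure, so a couple of rows (the ones straddling $t$ and $2t$) will need ad hoc entries, much as the second column in the $t\equiv0\pmod4$ lemma had to deviate from the pattern. I would lay out the bulk of the rows as blocks of six consecutive-ish integers arranged so that the alternating sum telescopes, then check the boundary rows by hand; a small parity computation (using $t\equiv 3\pmod 4$, so $3t-1\equiv 0\pmod 4$ and $3t-3\equiv 2\pmod 4$) confirms the count $2s+1$ of rows is an integer and that the sum identity is consistent mod the relevant powers of $2$. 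Everything else is the same routine verification carried out in the earlier lemmas.
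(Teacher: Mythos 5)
Your reduction fails at its very first step. You propose to peel off $\langle\{t,2t\}\rangle_{6t}$ as $t$ (not $3$, incidentally) vertex-disjoint copies of $K_6-I$, each carrying a cyclic hamiltonian cycle system by \cite{GM}. But the theorem of \cite{GM} quoted in the introduction says $K_n-I$ has a cyclic hamiltonian cycle system if and only if $n\equiv 2,4\pmod 8$ and $n\neq 2p^{\alpha}$; since $6\equiv 6\pmod 8$ (and $6=2\cdot 3$), $K_6-I$ has \emph{no} cyclic hamiltonian cycle system. This is not a repairable citation slip: any $\mathbb Z_{6t}$-invariant $6$-cycle decomposition of $\langle\{t,2t\}\rangle_{6t}$ would restrict, on the component $\{0,t,\ldots,5t\}$ with the action of $\rho^{t}$, to exactly such a forbidden system, so no cyclic $6$-cycle system of $\langle\{t,2t\}\rangle_{6t}$ exists at all. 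The whole point of the $m\equiv 2\pmod 4$ section is that the lengths $t$ and $2t$ cannot be isolated this way; the paper instead leaves $t$ and $2t$ inside the set handled by the difference-tuple array, and separately covers the eight lengths $\{3t-i,\,t-i\mid 1\le i\le 4\}$ by short-orbit $6$-cycles generated by the length-$2$ paths $P_i:0,\,3t-i,\,2t$ (orbit length $2t$, two edge lengths each). Your plan has no mechanism playing this role, so the array would have to cover a set of size $\equiv 0\pmod 6$ containing $t$ and $2t$ with full-orbit cycles only, which the argument above shows cannot absorb those two lengths in the way you intend without some such device.

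A second, independent problem is your disposal of $t=3$: you defer $K_{18}-I$ to ``the following lemma,'' but Lemma~\ref{t=3} explicitly excludes $m=6$ and relies on Lemma~\ref{m=6} to cover it, so your argument is circular. You would need to exhibit a cyclic $6$-cycle system of $K_{18}-I$ directly (it has $8$ nonzero lengths besides the $1$-factor length $9$, so four short-orbit cycles of two lengths each, or one full-orbit cycle plus an orbit-$6$ configuration, must be produced by hand).
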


\begin{proof}
Let $t$ be a positive integer such that $t \equiv 3 \ (\mod 4)$,  say $t=4s+3$ for
some non-negative integer $s$.  Then $K_{6t} - I = \langle S' \rangle_{6t}$ where $S'=\{1,2, \ldots, 12s+8\}$.

Consider the
paths $P_i : 0, 3t - i, 2t, \mbox{ for } 1 \leq i \leq 4$; then
$\ell (P_i) = \{ 3t - i, t - i \}$.  Next, let $C_i = P_i \cup
\rho^{2t} (P_i) \cup \rho^{4t} (P_i)$.  Then each $C_i$ is a 6-cycle
and $X = \{C_1, C_2,C_3,C_4 \}$ is a minimum generating set for
$\langle B \rangle_{6t}$ where $B=\{ 3t-i, t-i \mid 1 \leq i \leq 4 \}$. Now,
$t-5 = 4s-2$ and thus $S' \setminus B = \{ 1, 2, \ldots, 4s-2, 4s+3,
4s+4, \ldots, 12s+4 \}$, and so we must find a cyclic 6-cycle system
of $\langle S' \setminus B \rangle_{6t}$. Let $A = [a_{i,j}]$ be the $2s
\times 6$ array
\begin{eqnarray*} {\footnotesize
\left[ \begin{array}{lllllllllllllll}
1 & 2 & 3 & 4 & 8s+5 & 8s+7 \\
5 & 6 & 7 & 8 & 8s+6 & 8s+8 \\
\vdots & \vdots & \vdots & \vdots & \vdots & \vdots\\
4s-3 & 4s-2 & 4s+3 & 4s+4 & \alpha & \alpha + 2 \\
\vdots & \vdots & \vdots & \vdots & \vdots & \vdots\\
8s+1 & 8s+2 & 8s+3 & 8s+4 & 12s+2 & 12s+4\\
                     \end{array}
              \right]}
\end{eqnarray*}
where
\begin{equation*}
\alpha=\begin{cases} 10s+2 & \text{if s is even,}\\
                    10s+3 & \text{if s is odd.}
                    \end{cases}
\end{equation*}
Clearly, for each $i$ with $1\le i \le 2s$,
\[ a_{i,2} + \sum_{j \equiv 0,1\ (\mod 4)} a_{i,j} = a_{i,1} + \sum_{j
\equiv 2,3\ (\mod 4)} a_{i,j}\quad (\mbox{where } 3 \le j \le 6)\] and
\[a_{i,1} < a_{i,2} < \ldots < a_{i, 6}.\] Thus the $6$-tuple
$$(a_{i,1}, -a_{i,2}, a_{i,3}, -a_{i,4}, -a_{i,5},
a_{i,6})$$ is a difference $6$-tuple and corresponds to a $6$-cycle
$C'_i$ with $\ell(C'_i) = \{a_{i,1}, a_{i,2}, \ldots, a_{i,6}\}$.
Hence, $X' = \{C'_1, C'_2, \ldots, C'_{2s}\}$ is a minimum generating
set for a cyclic $6$-cycle system of $\langle S' \setminus B\rangle_{6t}$.
\end{proof}

Next we consider the case when $m=14$.

\begin{lemma} \label{m=14}
For all positive integers $t\equiv 3\ (\mod 4)$, there exists a cyclic $14$-cycle system of $K_{14t} - I$.
\end{lemma}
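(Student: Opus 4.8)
The plan is to carry the construction of Lemma~\ref{m=10} over to $m=14$, after isolating the smallest graphs. Write $t=4s+3$, so $K_{14t}-I=\langle S'\rangle_{14t}$ with $S'=\{1,2,\dots,7t-1\}=\{1,2,\dots,28s+20\}$. The two graphs $K_{42}-I$ and $K_{98}-I$ (the cases $t=3$ and $t=7$) are finite instances that I would dispose of by exhibiting, in the paper's usual style, an explicit partition of $S'$ into difference $14$-tuples together with a handful of short-orbit cycles; so assume henceforth that $t\ge 11$, i.e. $s\ge 2$.

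For $t\ge 11$ I would first peel off ten short-orbit cycles consuming the largest lengths. For $i=1,2,\dots,10$ let $P_i$ be the length-$2$ path $0,\,7t-i,\,2t$, whose edge lengths are $7t-i$ and $5t-i$ (these twenty numbers being distinct because $t\ge 11$), and set $C_i=P_i\cup\rho^{2t}(P_i)\cup\rho^{4t}(P_i)\cup\cdots\cup\rho^{12t}(P_i)$. Since $\gcd(2t,14t)=2t$, the element $\rho^{2t}$ has order $7$; and since $7t-i\equiv t-i\not\equiv 0\pmod{2t}$ whenever $1\le i\le 10<t$, the seven translates close up into a genuine $14$-cycle $C_i$ with $\ell(C_i)=\{7t-i,5t-i\}$. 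Thus $X=\{C_1,\dots,C_{10}\}$ is a minimum generating set for $\langle B\rangle_{14t}$, where $B=\{7t-1,\dots,7t-10\}\cup\{5t-1,\dots,5t-10\}$. Since $5t-11=20s+4$, $5t=20s+15$ and $7t-11=28s+10$, there remains $S'\setminus B=\{1,2,\dots,20s+4\}\cup\{20s+15,20s+16,\dots,28s+10\}$, a disjoint union of two intervals of even lengths $20s+4$ and $8s-4$, with $|S'\setminus B|=28s$.

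Next I would lay out $S'\setminus B$ as a $2s\times 14$ array $A=[a_{i,j}]$ with $a_{i,1}<a_{i,2}<\cdots<a_{i,14}$ in each row and
\[ a_{i,2}+\sum_{j\equiv 0,1\ (\mod 4)}a_{i,j}=a_{i,1}+\sum_{j\equiv 2,3\ (\mod 4)}a_{i,j}\qquad(\text{where }3\le j\le 14). \]
Concretely, in every row columns $1$--$4$ are four consecutive integers, columns $5,6$ are two integers differing by $2$, and columns $7$--$14$ are eight integers for which each of $(a_7,a_8),(a_9,a_{10}),(a_{11},a_{12}),(a_{13},a_{14})$ is a consecutive pair --- consecutive all across except in the single row straddling the gap between $20s+4$ and $20s+15$, where the jump lands between $a_{10}$ and $a_{11}$. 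Columns $1$--$4$ exhaust $\{1,\dots,8s\}$, columns $5,6$ exhaust $\{8s+1,\dots,12s\}$ (as pairs differing by $2$, with a parity bookkeeping in one row as with the $\alpha$ of Lemma~\ref{m=6}), and columns $7$--$14$ exhaust $\{12s+1,\dots,20s+4\}\cup\{20s+15,\dots,28s+10\}$; a routine check in the spirit of Lemma~\ref{t=5} confirms the displayed identity for every row, jump row included. Then the $14$-tuple
\[ (a_{i,1},-a_{i,2},a_{i,3},-a_{i,5},a_{i,7},-a_{i,9},a_{i,11},-a_{i,13},-a_{i,12},a_{i,10},-a_{i,8},a_{i,6},-a_{i,4},a_{i,14}) \]
is a difference $14$-tuple corresponding to a $14$-cycle $C_i'$ with $\ell(C_i')=\{a_{i,1},\dots,a_{i,14}\}$, so $X'=\{C_1',\dots,C_{2s}'\}$ generates a cyclic $14$-cycle system of $\langle S'\setminus B\rangle_{14t}$, and $X\cup X'$ is a minimum generating set for a cyclic $14$-cycle system of $K_{14t}-I$.

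The hard part is bookkeeping rather than any new idea: one must choose the endpoints and indices of the short-orbit paths so that the twenty deleted lengths are exactly $B$ and so that $S'\setminus B$ breaks into even blocks of precisely the shapes the array needs, arrange the array so the columns are grouped correctly and the signed-sum identity survives the jump row and the parity adjustment in columns $5,6$, and then verify condition~(iv) for the difference $14$-tuples. The genuinely exceptional point is $t=7$: there $7t-7$ and $5t-7$ are both multiples of $2t$, so they cannot appear in a short-orbit $2$-path cycle at all, which is exactly why $K_{98}-I$ (like the too-small $K_{42}-I$) must be handled by a tailored arrangement rather than by the uniform recipe above.
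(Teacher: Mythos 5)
Your construction for $t\ge 11$ is, in all essentials, the paper's own proof: the same ten short-orbit cycles generated by the paths $0,\,7t-i,\,2t$ for $1\le i\le 10$, the same set $B=\{7t-i,\,5t-i\mid 1\le i\le 10\}$, the same residual set $S'\setminus B=\{1,\ldots,20s+4\}\cup\{20s+15,\ldots,28s+10\}$, and the same kind of $2s\times 14$ array whose rows yield difference $14$-tuples via the signed pattern $(a_1,-a_2,a_3,-a_5,a_7,-a_9,a_{11},-a_{13},-a_{12},a_{10},-a_8,a_6,-a_4,a_{14})$; your structural description of the rows (a consecutive $4$-block, a pair differing by $2$, four consecutive pairs) does force the required sum identity, so that part is sound. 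You are also right --- and this is worth recording --- that the uniform recipe cannot work when $t\in\{3,7\}$: for $i=t$ the middle vertex $7t-t=6t$ of $P_t$ lies in $\langle 2t\rangle$, so the seven translates of $P_t$ collapse onto the seven vertices $0,2t,\ldots,12t$ and the resulting union is a $4$-regular graph on those vertices rather than a $14$-cycle (and for $t=3$ the twenty lengths listed in $B$ are not even distinct). The paper's proof runs the same construction for all $t\equiv 3\pmod 4$ without excluding these values, so your caution in fact identifies a defect in the published argument that would need repairing there as well.

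The gap in your proposal is that the cases $t=3$ and $t=7$ are never actually handled. Saying you ``would dispose of'' $K_{42}-I$ and $K_{98}-I$ by exhibiting explicit partitions is a promissory note, not a proof: one must actually partition $\{1,\ldots,20\}$, respectively $\{1,\ldots,48\}$, into length-sets of full-orbit difference $14$-tuples and of admissible short-orbit cycles (whose orbit lengths and length-multiplicities are constrained by Lemma~\ref{distinct_length}), and no such partition is written down or even sketched. Until those two finite decompositions are exhibited, your argument establishes the lemma only for $t\ge 11$.
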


\begin{proof}
Let $t$ be a positive integer such that $t \equiv 3 \ (\mod 4)$,  say $t=4s+3$ for
some non-negative integer $s$.  Then $K_{14t} - I = \langle S' \rangle_{14t}$ where $S'=\{1,2, \ldots, 28s+20\}$.

Consider the
paths $P_i : 0, 7t - i, 2t, \mbox{ for } 1 \leq i \leq 10$; then
$\ell (P_i) = \{ 7t - i, 5t - i \}$.  Next, let $C_i = P_i \cup
\rho^{2t} (P_i) \cup \rho^{4t} (P_i) \cup \cdots \cup \rho^{12t}(P_i)$.  Then each $C_i$ is a $14$-cycle
and $X = \{C_1, C_2,\ldots ,C_{10} \}$ is a minimum generating set for
$\langle B \rangle_{14t}$ where $B=\{ 7t-i, 5t-i \mid 1 \leq i \leq 10 \}$. Now,
$5t-10 = 20s+5$ and thus $S' \setminus B = \{ 1, 2, \ldots, 20s+4, 20s+15,
20s+16, \ldots, 28s+10 \}$, and so we must find a cyclic 14-cycle system
of $\langle S' \setminus B \rangle_{14t}$. Let $A = [a_{i,j}]$ be the $2s
\times 14$ array
\begin{eqnarray*} {\footnotesize
\left[ \begin{array}{lllllllllllllll}
1 & 2 & 3 & 4 & 8s+1 & 8s+3 & 12s+1 & 12s+2  & 12s+3 & 12s+4& \\
5 & 6 & 7 & 8 & 8s+2 & 8s+4 & 12s+5 & 12s+6 & 12s+7 & 12s+8  &\\
9 & 10 & 11 & 12 & 8s+5 & 8s+7 & 12s+9 & 12s+10 & 12s+11 & 12s+12&  \\
\vdots & \vdots & \vdots & \vdots & \vdots & \vdots & \vdots & \vdots & \vdots & \vdots & \\
8s-3   & 8s-2   & 8s-1   & 8s     & 12s-2  & 12s    & 20s-3  & 20s-2  & 20s-1  & 20s &   \\
                     \end{array}
              \right.}
\end{eqnarray*}
\begin{eqnarray*} {\footnotesize
\left. \begin{array}{lllllllllllllll}
20s+1 & 20s+2 & 20s+3 & 20s+4\\
20s+15 & 20s+16 & 20s+17 & 20s+18 \\
20s+19 & 20s+20 & 20s+21 & 20s+22 \\
\vdots & \vdots & \vdots & \vdots\\
28s+7  & 28s+8  & 28s+9 & 28s+10\\
                     \end{array}
              \right].}
\end{eqnarray*}

Clearly, for each $i$ with $1\le i \le 2s$,
\[ a_{i,2} + \sum_{j \equiv 0,1\ (\mod 4)} a_{i,j} = a_{i,1} + \sum_{j
\equiv 2,3\ (\mod 4)} a_{i,j}\quad (\mbox{where } 3 \le j \le 14)\] and
\[a_{i,1} < a_{i,2} < \ldots < a_{i, 14}.\] Thus the $14$-tuple
$$(a_{i,1}, -a_{i,2}, a_{i,3}, -a_{i,5},
a_{i,7}, -a_{i,9}, a_{i,11}, -a_{i,13}, -a_{i,12}, a_{i,10}, -a_{i,8}, a_{i,6}, -a_{i,4}, a_{i,14})$$ is a difference $14$-tuple and corresponds to a $14$-cycle
$C'_i$ with $\ell(C'_i) = \{a_{i,1}, a_{i,2}, \ldots, a_{i,14}\}$.
Hence, $X' = \{C'_1, C'_2, \ldots, C'_{2s}\}$ is a minimum generating
set for a cyclic $14$-cycle system of $\langle S' \setminus B\rangle_{14t}$.
\end{proof}

We now consider the case when $t = 3$.

\begin{lemma} \label{t=3}
For all positive integers $m \equiv 6\
(\mod 8)$, there exists a cyclic $m$-cycle system of $K_{3m} - I$.
\end{lemma}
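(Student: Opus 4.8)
The plan is to follow the template of Lemma~\ref{t=5}. Write $m = 8r+6$, so $K_{3m}-I = \langle S'\rangle_{3m}$ with $S' = \{1, 2, \dots, 12r+8\}$, and note $3m = 6(4r+3)$. I would build a minimum generating set consisting of a single $m$-cycle $D$ coming from one difference $m$-tuple (so $\ell(D)$ is a set of $m = 8r+6$ lengths, forcing $D$ into an orbit of length $3m$ by Lemma~\ref{distinct_length}(1)), together with $2r+1$ ``short'' $m$-cycles, each of the form $C = P \cup \rho^{6u}(P) \cup \rho^{12u}(P) \cup \cdots$ for a length-$2$ path $P : 0,\, d,\, 6u$ with $\gcd(u, 4r+3) = 1$. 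Since $\gcd(6u, 3m) = 6\gcd(u,4r+3) = 6$, such a $C$ is an $m$-cycle whenever $d \not\equiv 0\ (\mod 6)$, and then $\ell(C) = \{|d|_{3m},\, |d - 6u|_{3m}\}$ is a pair of lengths lying in a single nonzero residue class mod $6$.

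The key structural constraint is that the $2r+1$ multiples of $6$ in $S'$, namely $6, 12, \dots, 12r+6$, cannot appear in any of the short cycles: by Lemma~\ref{distinct_length}(5)--(8) every two-length $m$-cycle of orbit length $6$ has the path-and-translates form above, so both of its lengths are nonzero mod $6$. Hence I would place all $2r+1$ of these multiples of $6$ in $\ell(D)$, which is possible since $m = 8r+6 > 2r+1$. I would then complete $\ell(D)$ with a suitably chosen set of $6r+5$ non-multiples of $6$, picked so that the remaining $4r+2$ non-multiples of $6$ split into $2r+1$ pairs $\{d,\, d+6u\}$ with $\gcd(u, 4r+3) = 1$; assigning to each such pair the path $0,\, d+6u,\, 6u$ produces the short cycles, and $\{\ell(D)\}$ together with the sets $\ell(C)$ then partitions $S'$.

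The technical heart is exhibiting the difference $m$-tuple on the $m$ lengths chosen for $\ell(D)$. As in the earlier lemmas, I would arrange these lengths in a short array $A = [a_{i,j}]$ with increasing rows, $a_{i,1} < a_{i,2} < \cdots < a_{i,m}$, satisfying the alternating-sum identity appropriate to $m \equiv 2\ (\mod 4)$, namely $a_{i,2} + \sum_{j \equiv 0,1\ (\mod 4)} a_{i,j} = a_{i,1} + \sum_{j \equiv 2,3\ (\mod 4)} a_{i,j}$ (the sums over $3 \le j \le m$), and then read off the interleaved tuple $(a_{i,1}, -a_{i,2}, a_{i,3}, -a_{i,5}, a_{i,7}, \dots, -a_{i,m-1}, -a_{i,m-2}, a_{i,m-4}, \dots, -a_{i,4}, a_{i,m})$ exactly as in Lemmas~\ref{4m} and \ref{t=5}. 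The identity forces the entries to sum to $0$ mod $3m$, and the monotonicity of the row forces the partial sums to be distinct, the odd-indexed ones landing in one interval of $\Z_{3m}$ and the even-indexed ones in a disjoint interval (the same bookkeeping used in Lemmas~\ref{0mod8_2m} and \ref{4m}); hence the tuple is a valid difference $m$-tuple corresponding to $D$.

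The part I expect to be hardest is getting one clean formula for the array $A$ when $\ell(D)$ must contain the scattered multiples of $6$ rather than being a union of intervals: following the precedent of Lemma~\ref{t=5} I anticipate writing $r$ in a suitable modular form, splitting into a bounded number of cases, and adjusting the last column or two by hand before verifying partial-sum distinctness from the resulting vertex intervals. Finally, $m = 6$ and $m = 14$ ($r = 0, 1$) are too small for the general layout, so I would dispatch these two values by explicit minimum generating sets; for $K_{18}-I$, for example, the difference $6$-tuple $(1, 3, 4, 5, -6, -7)$ (giving the $6$-cycle $(0,1,4,8,13,7)$) together with the short cycle from the path $0,\, 8,\, 6$ already works, and a similar ad hoc check settles $m = 14$.
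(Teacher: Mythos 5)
Your plan reproduces the paper's architecture for this lemma essentially exactly: for $m=8r+6$ with $r\ge 2$, one $m$-cycle in a full-length orbit whose length set absorbs all $2r+1$ multiples of $6$, together with $2r+1$ two-length cycles in orbits of length $6$ obtained from paths $0,\,d,\,6u$ with $\gcd(u,4r+3)=1$, and the values $m=6,14$ dispatched separately (your explicit generating set for $K_{18}-I$ checks out). Your observation that Lemma~\ref{distinct_length}(5)--(6) forces both lengths of any two-length cycle off the residue class $0\pmod 6$, so that every multiple of $6$ must lie in the big cycle, is also correct and is precisely the constraint the paper's construction respects.

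The gap is that for a lemma of this kind the explicit construction \emph{is} the proof, and you have specified only the properties your objects must have, not the objects themselves. Two things are left undone. First, the pairing: you must say which $4r+2$ non-multiples of $6$ form the pairs $\{d,\,d+6u\}$ and for which $u$. The paper takes $u=2^{a-1}$ (coprime to the odd number $4r+3$ for free), writes $2r=4q+b+2$ with $b\in\{0,2\}$, and constrains $a$ by $1+\log_2(q+1)\le a\le 1+\log_2(3q+3/2+5b/6)$; these two bounds are exactly what make the $4r$ paired lengths distinct and leave the complement as a union of even-cardinality blocks of consecutive integers, and the existence of such an $a$ needs its own short argument. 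Second, the big tuple: the generic sorted-row/interleaving template you invoke does not apply off the shelf. With the natural leftover block $\{1,\ldots,8\}$ and pairs $\{b_i,b_i+1\}$, the sorted row $1,3,5,6,7,8,b_1,b_1+1,\ldots$ misses the required alternating-sum identity by exactly $2$; the paper repairs this by peeling $\{2,4\}$ off into one extra short cycle from the path $0,2,6$ and using the bespoke tuple $(1,-3,6,-7,b_1,-b_2,\ldots,-(b_1+1),8,-5,b_{4r}+1)$, in which $5$ and $6$ trade places relative to the generic pattern --- after which the row is no longer monotone and the distinctness of the partial sums must be rechecked directly rather than read off from monotonicity. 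You correctly predict that hand adjustment will be needed, but until the pairing, the tuple, and the partial-sum verification are actually written down, the lemma is not proved.
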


\begin{proof}
Let $m$ be a positive integer such that $m  \equiv 6 \ (\mod
8)$, say $m=8r + 6$ for
some non-negative integer $r$. By Lemmas \ref{m=6} and \ref{m=14}, we may assume $r \ge 2$. Then $K_{3m} - I = \langle S' \rangle_{mt}$ where $S'=\{1,2, \ldots, 12r+8\}$. Write $2r=4q+b+2 \mbox{ for integers } q \geq 0 \mbox{
and } b \in
\{0,2\}$, and let $a$ be a positive integer such that $1+ \log_2 (q+1) \leq a \leq 1+
\log_2 (3q+3/2+5b/6)$. For integers $i$ and
$j$, define $d_{i,j}=6(2r-i)+j$.  Then consider the path
$P_{i,j} : 0, d_{i,j},3\cdot 2^a$; so  $\ell(P_{i,j})=\{6(2r-i)+j,6(2r-i)+j-3\cdot
2^a\}$. Now, let
$C_{i,j}=P_{i,j} \cup \rho^{6}(P_{i,j})\cup \cdots \cup
\rho^{3(m-2)}(P_{i,j})$. Then $ C_{i,j}$ is an $m$-cycle since $m
\equiv 6 \ (\mod 8)$ implies $\gcd(3 \cdot 2^a,3m)=6$. Thus, $\ell
(C_{i,j})=\ell (P_{i,j})$.

Now, let
\begin{eqnarray*}
X & = & \{ C_{0,j} \mid  j=7,8 \}\\
&& \cup \  \{ C_{i,j} \mid 0 \leq i \leq q-1 \mbox{ and } 1 \leq j
\leq 4 \}\\
&& \cup \  \{ C_{q,j} \mid 5-b \leq j \leq 4\}\\
\end{eqnarray*}
and let
\begin{eqnarray*}
B & = & \{12r+7,12r+7-3\cdot 2^a,12r+8,12r+8-3\cdot2^a \}\\
&& \cup \  \{6(2r-i)+j,6(2r-i)-3\cdot 2^a+j \mid 0 \leq i \leq
q-1 \mbox{ and }1\leq j \leq 4\} \\
&& \cup \ \{6(2r-q)+j,6(2r-q)-3\cdot2^a+j \mid 5-b \leq j \leq 4\}\\
\end{eqnarray*}
where, if $q=0$ or $b=0$, we take the corresponding sets to be empty as necessary.
Now $B$ will consists of $4r$ distinct lengths and $X$ will be a minimum generating set for $\langle B \rangle_{3m}$ if $12r + 8 - 3 \cdot 2^a \le 6(2r-q) + 5-b -1$.  Note that $1+ \log_2 (q+1) \leq a$ so that $q+1 \le 2^{a-1}$. Next,
$$12r + 8 - [6(2r-q) + 5-b-1] = 6q + 4 + b \le 6q + 6 = 6(q+1) \le 6 \cdot 2^{a-1} = 3 \cdot 2^a,$$
and hence $12r + 8 - 3 \cdot 2^a \le 6(2r-q) + 5-b -1$.  Thus, $B$ consists of $4r$ distinct lengths, and $X$ is a minimum generating set for $\langle B \rangle_{3m}$.
Now, the smallest length in $B$ is $6(2r-q)+5-b-3 \cdot 2^a$ and we want this length
to be greater than $8$.  Recall that $a \leq 1+
\log_2 (3q+3/2+5b/6)$ and thus $2^{a-1} \le 3q + 3/2 + 5b/6$.  Hence, $3 \cdot 2^a \le 18q + 9 + 5b = 12r - 6q - 3 - b$ since $2r = 4q + b + 2$.  Therefore, $6(2r-q) + 5-b - 3 \cdot 2^a\ge 8$. Since $|B| = 4r$, we have $|S'\setminus B| = 8r + 8$.  Note that
\begin{eqnarray*}
S' \setminus B &=& \{1, 2, \ldots, 6(2r-q) + 5-b - 3 \cdot 2^a-1\}\\
&&  \cup \  \{6(2r-i) -3\cdot 2^a +5,  6(2r-i) -3\cdot 2^a +6 \mid 0 \le i \le q\}\\
&& \cup \ \{12r - 3 \cdot 2^a + 9, \ldots, 6(2r-q) + 5-b-1\} \\
&& \cup \ \{6(2r-i) +5, 6(2r-i)+6 \mid 0 \le i \le q\}.
\end{eqnarray*}
Note that $S'\setminus B$ has been written as the disjoint union of sets, each of which has even cardinality and consists of consecutive integers.
Therefore, we may partition $S' \setminus B$ into sets $T, S_1, S_2,
\ldots, S_{4r}$ where $T = \{1, 2, \ldots, 8\}$ and for $i = 1,
2, \ldots, 4r$, let $S_i = \{b_{i}, b_{i}+1\}$ with $b_{1} < b_{2}
< \cdots < b_{4r}.$

Consider the $m$-tuple
\begin{eqnarray*}
(1, -3, 6, -7, b_1, -b_2, b_3, -b_4, \ldots, b_{4r-1}, -b_{4r}, -(b_{4r-1}+1), b_{4r-2}+1,\\ -(b_{4r-3}+1), b_{4r-4}+1,
\ldots, b_2+1, -(b_1+1), 8, -5, b_{4r}+1)
\end{eqnarray*}
which is a difference $m$-tuple and corresponds to an
$m$-cycle $C_1$ with $$\ell(C_1) = \{1, 3, 5, 6, 7 ,8, b_1, b_1+1, b_2, b_2+1, \ldots, b_{4r}, b_{4r}+1\}.$$
 Then consider the path
$P : 0, 2,6$; so  $\ell(P)=\{2, 4\}$. Now, let
$C_2=P\cup \rho^{6}(P)\cup \cdots \cup
\rho^{3(m-2)}(P)$. Then $ C_2$ is an $m$-cycle since $m
\equiv 6 \ (\mod 8)$ implies $\gcd(6,3m)=6$. Thus, $\ell
(C_2)=\ell (P) = \{2, 4\}$.
 Hence, $X' = \{C_1, C_2\}$ is a minimum generating set for a
cyclic $m$-cycle system of
$ \langle S' \setminus B \rangle_{3m}$.
\end{proof}

We now prove the main result of this subsection, namely that $K_{mt} - I$ has a cyclic $m$-cycle system for every $t\equiv 3\ (\mod 4)$ and  $m \equiv 6\
(\mod 8)$.

\begin{lemma}
For all positive integers $t\equiv 3\ (\mod 4)$ and  $m \equiv 6\
(\mod 8)$, there exists a cyclic $m$-cycle system of $K_{mt} - I$.
\end{lemma}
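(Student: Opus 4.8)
The plan is to follow the template of the proof of Lemma~\ref{t=1(mod4)}, adapted to the residue class $m \equiv 6\ (\mod 8)$. Write $m = 8r+6$ and $t = 4s+3$. By Lemmas~\ref{m=6}, \ref{m=14} and \ref{t=3} we may assume $r \ge 2$ and $s \ge 1$, and since the position estimates below break for the smallest admissible $s$ (just as they do in Lemma~\ref{t=1(mod4)} for $s = 1$, which is exactly why Lemma~\ref{t=5} is needed there), I would first dispose of the case $t = 7$ by a construction analogous to those of Lemmas~\ref{t=5} and \ref{t=3}, using an auxiliary exponent $a$ chosen in a logarithmic range and top cycles built from paths $0, d_{i,j}, 2t\cdot 2^a$; thereafter assume $s \ge 2$. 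Throughout, $K_{mt} - I = \langle S' \rangle_{mt}$ with $S' = \{1, 2, \ldots, (mt-2)/2\}$, and a short computation gives $(mt-2)/2 = (2s+1)m + 4r + 2$.

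The first step is to peel off a set $B$ of $4r$ large lengths. For suitable ranges of $i$ and $j$ put $d_{i,j} = 4(r-2i)t + j$, let $P_{i,j}$ be the length-$2$ path $0, d_{i,j}, 4t$, and set $C_{i,j} = P_{i,j} \cup \rho^{2t}(P_{i,j}) \cup \rho^{4t}(P_{i,j}) \cup \cdots \cup \rho^{(m-2)t}(P_{i,j})$. Since $m \equiv 2\ (\mod 4)$ we have $\gcd(4t, mt) = 2t$, so $\langle 4t\rangle = \langle 2t\rangle$ in $\mathbb Z_{mt}$, and hence each $C_{i,j}$ is a genuine $m$-cycle lying in an orbit of length $2t$ with $\ell(C_{i,j}) = \{4(r-2i)t+j,\ 4(r-2i-1)t+j\}$. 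Letting $i$ range from $0$ up to about $r/2$ and, for each $i$, letting $j$ range over an interval of length about $t$ (with a short partial last block, controlled by writing $\lfloor r/2\rfloor = qs+b$ with $0 \le b < s$ and recording the parity $\delta_r$ of $r$, exactly as in Lemma~\ref{t=1(mod4)}), these length sets partition a set $B \subseteq S'$ with $|B| = 4r$, and $\{C_{i,j}\}$ is a minimum generating set for a cyclic $m$-cycle system of $\langle B\rangle_{mt}$. The delicate point here is the chain of inequalities showing that the least length placed in $B$ is of order $12s$, hence that $B$ really consists of $4r$ distinct lengths all lying strictly above the block of $S'$ treated next; this is the precise analogue of the estimates at the end of the proof of Lemma~\ref{t=1(mod4)}, and it is what forces the separate treatment of $t = 7$.

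It then remains to find a cyclic $m$-cycle system of $\langle S' \setminus B\rangle_{mt}$, where $|S' \setminus B| = (2s+1)m + 2$. Following the proof of Lemma~\ref{t=3}, I would split off one pair $\{2, 2t-2\}$ of small lengths as $\ell(C_2)$, where $C_2 = P \cup \rho^{2t}(P) \cup \cdots \cup \rho^{(m-2)t}(P)$ for the length-$2$ path $P : 0, 2, 2t$; this is an $m$-cycle of orbit length $2t$ since $\gcd(2t, mt) = 2t$. What remains is a disjoint union of even-length blocks of consecutive integers of total size $(2s+1)m$, which I would distribute among the rows of a $(2s+1)\times m$ array $A = [a_{i,j}]$ satisfying $a_{i,1} < a_{i,2} < \cdots < a_{i,m}$ and the balance relation
\[ a_{i,2} + \sum_{j \equiv 0,1\ (\mod 4)} a_{i,j} = a_{i,1} + \sum_{j \equiv 2,3\ (\mod 4)} a_{i,j} \qquad (3 \le j \le m),\]
using the same block layout as in Lemmas~\ref{m=6}, \ref{m=14} and \ref{t=1(mod4)} (a short initial block of consecutive integers, of size roughly $12s$, supplying the first six columns of each row, followed by pairs $\{b_i, b_i+1\}$), with the last row possibly handled by the variant tuple used for $C_1$ in Lemma~\ref{t=3}. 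Each row $i$ then determines the difference $m$-tuple
\[ (a_{i,1}, -a_{i,2}, a_{i,3}, -a_{i,5}, a_{i,7}, \ldots, -a_{i,m-1}, -a_{i,m-2}, a_{i,m-4}, \ldots, a_{i,6}, -a_{i,4}, a_{i,m}),\]
whose associated $m$-cycle $C_i$ has $\ell(C_i) = \{a_{i,1}, \ldots, a_{i,m}\}$; together with $C_2$ and the cycles $C_{i,j}$ this gives the desired minimum generating set.

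The main obstacle is verifying, for each such $m$-tuple, condition~(iv) in the definition of a difference $m$-tuple: that the $m$ partial sums are pairwise distinct modulo $mt$. As in the earlier lemmas this is handled by treating separately the partial sums ending at an odd position of the tuple and those ending at an even position; the balance relation makes the full sum vanish mod $mt$, the increasing property of each row confines each of the two families of partial sums to a controlled interval, and a parity count shows that the two families lie in disjoint residue classes and are internally distinct. The genuinely new feature, compared with Lemma~\ref{t=1(mod4)}, is that $|S' \setminus B| \equiv 2\ (\mod m)$ rather than $\equiv 0$, which forces the extra cleanup cycle $C_2$ and an odd number $2s+1$ of array rows; checking that the consecutive-integer blocks comprising $S' \setminus B$ still all have even length and can be packed into a $(2s+1)\times m$ array of the prescribed shape is the final piece of bookkeeping.
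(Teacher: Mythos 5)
Your overall architecture --- peel off a set $B$ of large lengths using orbit-$2t$ cycles generated by length-two paths, then pack $S'\setminus B$ into rows of a balanced array yielding difference $m$-tuples --- is exactly the paper's, and the reduction to the base cases of Lemmas~\ref{m=6}, \ref{m=14} and \ref{t=3} is right. But your choice of $|B|=4r$ is where the argument diverges from the paper's and where the real gap lies. With $m=8r+6$ you correctly compute $|S'\setminus B|=(2s+1)m+2$, which is not a multiple of $m$; you are therefore forced into an odd number $2s+1$ of array rows, one of which must be a ``variant'' row, plus one extra two-length cleanup cycle $C_2$. None of this is actually constructed. The variant tuple of Lemma~\ref{t=3} is built for the specific situation of a single row with specials $\{1,3,5,6,7,8\}$ and holes at $\{2,4\}$; your cleanup path $P:0,2,2t$ removes the lengths $\{2,\,2t-2\}=\{2,\,8s+4\}$, which for $s\ge 1$ punches a hole in the middle of the interval $\{1,\dots,12s+6\}$ that your $2s$ standard rows and the variant row must consume as their first six columns. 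Designing $2s+1$ mutually compatible rows around these two holes, while preserving both the balance identity and the monotonicity $a_{i,1}<\cdots<a_{i,m}$ in every row, is the heart of the matter and is precisely the step you defer to ``bookkeeping.'' On top of this, your inequalities force a separate, unproven base case $t=7$ (an analogue of Lemmas~\ref{t=5} and \ref{t=3}, which are themselves delicate, involving the logarithmic choice of the exponent $a$).

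The paper avoids all of these complications by taking $B$ much larger: writing $6r+4=(2t-2)q+(t-1)\ell+b$, it removes $|B|=2(6r+4)=12r+8$ lengths using three families of two-length-class cycles generated by paths $0,d_{i,j},2t$, including a family that consumes the top block $\{(4r+2)t+1,\dots,(4r+3)t-1\}$ of $S'$, which your $B$ never reaches. Then $|S'\setminus B|=2sm$ exactly, the standard $2s\times m$ array of Lemma~\ref{t=1(mod4)} applies verbatim with $T=\{1,\dots,12s\}$ and pairs $\{b_i,b_i+1\}$, and the estimate that the least element of $B$ exceeds $12s$ holds for all $s\ge1$ and $r\ge2$, so no extra base case is needed. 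If you want to pursue the smaller $B$, you must either exhibit the combined variant-plus-standard array explicitly or choose the cleanup cycle so that its two lengths sit outside the specials interval; as written, the proposal does not establish the lemma.
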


\begin{proof}
Let $m$ and $t$ be positive integers such that $m  \equiv 6 \ (\mod
8)$ and $t \equiv 3 \ (\mod 4).$ Then $m=8r + 6$ and $t=4s+3$ for
some non-negative integers $ r$ and $s$.  Then $K_{mt} - I = \langle S' \rangle_{mt}$ where $S'=\{1,2, \ldots, (4r+3)t -1\}$.

By Lemmas \ref{m=6}, \ref{m=14}, and \ref{t=3}, we may assume $s\geq 1$ and $r\geq 2$. First, write
$6r+4=(2t-2)q+(t-1)\ell+b$ for integers $q , \ell \mbox{ and } b$
with $q \geq 0, 0 \leq b < 2t-2$ , and $\ell=0$ if $6r+4 < t-1$, or
$\ell=1$ otherwise. For integers $i$ and $j$, define $d_{i,j} =
2t(2r-2i-1) + j$. Consider the path $P_{i,j} : 0, d_{i,j}, 2t$ and
note that $\ell(P_{i,j}) = \{2t(2r-2i-1)+j, 2t(2r-2i-2)+j \}$. If
$0< j < 2t$, then $C_{i,j} = P_{i,j} \cup \rho^{2t}(P_{i,j}) \cup
\rho^{4t}(P_{i,j}) \cup \cdots \cup \rho^{(m-2)t}(P_{i,j})$  is an
$m$-cycle since $m \equiv 6\ (\mod 8)$
implies $\gcd(2t, mt) = 2t$. Thus, if $0 < j < 2t$, then
$\ell(C_{i,j}) = \ell(P_{i,j})$.

Now, let
\begin{eqnarray*}
X & = & \{ C_{-1,j} \mid 1 \leq j \leq t-1 \}\\
&& \cup \  \{ C_{i,j} \mid 0 \leq i \leq q-1 \mbox{ and } 1 \leq j
\leq 2t-2 \}\\
&& \cup \  \{ C_{q,j} \mid 2t-1-b \leq j \leq 2t-2\}\\
\end{eqnarray*}
and let
\begin{eqnarray*}
B & = & \{ 2t(2r+1) + j, 2t(2r) + j \mid 1 \leq j \leq t-1 \}\\
&& \cup \  \{2t(2r-2i -1) +j, 2t(2r-2i-2) +j \mid 1 \leq j \leq
2t-2 \mbox{ and } 0\leq i \leq q-1\} \\
&& \cup \ \{2t(2r-(2q+1))+2t-2-j, 2t(2r-2q-2) +2t -2 -j \mid 0 \leq
j \leq b-1\}.
\end{eqnarray*}
where we take the first set to be empty if $\ell=0$, the second to be empty if $q=0$, and the third to be empty if $b=0$. Then $X$
is a minimum generating set for $ \langle B \rangle_{mt}$.

Now we must find a cyclic $m$-cycle system of $ \langle S'\setminus
B \rangle_{mt}$. First, $|B|=2[(2t-2)q +(t-1)\ell +b] = 12r+8$ so that $|S'
\setminus B| =(4r+3)t - 1 -12r-8=(8r+6)(2s)$. Moreover,
\begin{eqnarray*}
S' \setminus B & = & \{1,2, \ldots, 2t(2r-2q-1)-b-2\}\\
&& \cup \ \{2t(2r-2q-1)-1, 2t(2r-2q-1), \ldots,2t(2r-2q)-b-2\}\\
&& \cup \ \{2t(2r-i)-1,2t(2r-i) \mid 0\leq i \leq 2q\} \\
&& \cup \ \{4rt+t,4rt+t+1,\ldots ,4rt+2t\}.\\
\end{eqnarray*}
The smallest length in $B$ is $4t(r-q-1) + (2t-1) -b$, and we must
verify that this length is at least $12s+1$. Note that we have $2t-1-b > 1$. Thus, it is sufficient
to prove that $4t(r-q-1) \geq 12s$, or $t(r-q-1) \geq 3s$. This
inequality follows if $r>q+1$. Clearly, this is true if $q=0$ since $r \ge 2$, so
assume $q \geq 1$. Then $\ell=1$, and so $6r+4=2q(4s+2)+(4s+2)+b$,
or
\begin{eqnarray*}
3r+2 & = &q(4s+2)+2s+1+b/2\\
&=&4qs+2q+2s+1+b/2\\
& \geq & 6q+3  \hspace{.1in}(\mbox{since } s \geq 1).\\
\end{eqnarray*}
So, $r \geq 2q+ 1/3 > q+1$ since $q \geq 1$. Since the smallest
length in $B$ is at least $12s+1$ and $S' \setminus B$ consists of
sets of consecutive integers of even cardinality, we may partition
$S' \setminus B$ into sets $T,S_1,\ldots,S_{8rs}$ where
$T=\{1,2,\ldots,12s\}$, and for $i=1,2,\ldots, 8rs,
S_i=\{b_i,b_i+1\}$ with $b_1\leq b_2\leq \cdots \leq b_{8rs}$. Let
$A = [a_{i,j}]$ be the $2s \times m$ array
\begin{eqnarray*} {\footnotesize
\left[ \begin{array}{lllllllllllllll}
1 & 2 & 3 & 4 & 8s+1 & 8s+3 & b_1 & b_1+1 &  \\
5 & 6 & 7 & 8 & 8s+2 & 8s+4 & b_{4r+1} & b_{4r+1}+1 &  \\
\vdots & \vdots & \vdots & \vdots & \vdots & \vdots & \vdots &
\vdots &
\cdots \\
8s-3 & 8s-2 & 8s-1 & 8s &12s-2 & 12s & b_{8rs-4r+1} & b_{8rs-4r+1}+1
&
\\
                     \end{array}
              \right.}
\end{eqnarray*}
\begin{eqnarray*} {\footnotesize
\left. \begin{array}{llllllllllllllllll}
b_2 & b_2+1 & \cdots & b_{4r} & b_{4r} + 1\\
b_{4r+2} & b_{4r+2}+1 &  \cdots & b_{8r} & b_{8r} + 1 \\
\vdots & \vdots & \cdots & \vdots &\vdots  \\
b_{8rs-4r+2} & b_{8rs-4r+2}+1 & \cdots & b_{8rs} & b_{8rs}+1 \\
                     \end{array}
              \right].}
\end{eqnarray*}

Clearly, for each $i$ with $1\le i \le 2s$,
\[ a_{i,2} + \sum_{j \equiv 0,1\ (\mod 4)} a_{i,j} = a_{i,1} + \sum_{j
\equiv 2,3\ (\mod 4)} a_{i,j}\ (\mbox{where } 3 \le j \le m)\] and
\[a_{i,1} < a_{i,2} < \ldots < a_{i, m}.\] Thus the $m$-tuple
$$(a_{i,1}, -a_{i,2}, a_{i,3}, -a_{i,5}, a_{i,7}, \ldots, a_{i,m-3},
-a_{i,m-1}, -a_{i,m-2}, a_{i,m-4}, -a_{i,m-6}, \ldots, a_{i,6},
-a_{i,4}, a_{i,m})$$ is a difference $m$-tuple and corresponds to an
$m$-cycle $C_i$ with $\ell(C_i) = \{a_{i,1}, a_{i,2}, \ldots,
a_{i,m}\}$.  Hence, $X' = \{C_1, C_2, \ldots, C_{2s}\}$ is a minimum
generating set for a cyclic $m$-cycle system of $ \langle S'
\setminus B \rangle_{mt}$.
\end{proof}

\end{document}